\newcommand*{\mailto}[1]{\href{mailto:#1}{\nolinkurl{#1}}}
\newcommand{\arxiv}[1]{\href{http://arxiv.org/abs/#1}{arXiv:#1}}
\newcommand{\msc}[1]{\href{http://www.ams.org/msc/msc2010.html?t=&s=#1}{#1}}
\newtheorem{theorem}{Theorem}[section]
\newtheorem{lemma}[theorem]{Lemma}
\newtheorem{corollary}[theorem]{Corollary}
\newtheorem{remark}[theorem]{Remark}
\theoremstyle{definition}
\newcommand{\R}{{\mathbb R}}
\newcommand{\Z}{{\mathbb Z}}
\newcommand{\C}{{\mathbb C}}
\newcommand{\cL}{{\mathcal L}}
\newcommand{\cF}{{\mathcal F}}
\newcommand{\id}{{\mathbbm{1}}}
\newcommand{\OO}{{\mathcal O}}
\newcommand{\cW}{{\mathcal W}}
\newcommand{\dom}{\mathrm{dom}}
\newcommand{\gt}{\mathfrak{t}}
\newcommand{\I}{\mathrm{i}}
\newcommand{\E}{\mathrm{e}}
\newcommand{\be}{\begin{equation}}
\newcommand{\ee}{\end{equation}}
\newcommand{\wt}{\widetilde}
\newcommand{\lam}{\lambda}
\newcommand\cS{{\mathcal{S}}}
\newcommand{\hyp}[5]{\,\mbox{}_{#1}F_{#2}\!\left(
  \genfrac{}{}{0pt}{}{#3}{#4};#5\right)}
\newcommand{\dlmf}[1]{%
\cite[%
 \def\nextitem{\def\nextitem{, }}%
 \@for \el:=#1\do{\nextitem\expandafter\dlmf@eq@href\el...\end}%
]{dlmf}%
}
\def\dlmf@eq@href#1.#2.#3.#4\end{%
  \href{http://dlmf.nist.gov/#1.#2.E#3}{(#1.#2.#3)}}
\numberwithin{equation}{section}
\begin{document}

\title[The Discrete Laguerre Operator]{Heat Kernels of the Discrete Laguerre Operators}

\author[A. Kostenko]{Aleksey Kostenko}
\address{Faculty of Mathematics and Physics\\ University of Ljubljana\\ Jadranska ul.\ 19\\ 1000 Ljubljana\\ Slovenia\\ and 
Institute for Analysis and Scientific Computing\\ Vienna University of Technology\\ Wiedner Hauptstra\ss e 8-10/101\\1040 Vienna\\ Austria}
\email{\mailto{Aleksey.Kostenko@fmf.uni-lj.si}}
\urladdr{\url{https://www.fmf.uni-lj.si/~kostenko/}}

\thanks{{\it Research supported by the Slovenian Research Agency (ARRS) under Grant No.\ N1-0137
and the Austrian Science Fund (FWF) under Grant No.\ P28807}}

\thanks{Lett.\ Math.\ Phys., {\it to appear}}

\keywords{Laguerre operator, heat equation, Jacobi polynomials, ultracontractivity}
\subjclass[2010]{Primary \msc{33C45}, \msc{47B36}; Secondary \msc{47D07}, \msc{81Q15}}

\begin{abstract}
For the discrete Laguerre operators we compute explicitly the corresponding heat kernels by expressing them with the help of Jacobi polynomials. This enables us to show that the heat semigroup is ultracontractive and to compute the corresponding norms. On the one hand, this helps us to answer basic questions (recurrence, stochastic completeness) regarding the associated Markovian semigroup. On the other hand, we prove the analogs of the Cwiekel--Lieb--Rosenblum and the Bargmann estimates for perturbations of the Laguerre operators, as well as  the optimal Hardy inequality.
\end{abstract}

\maketitle

\section{Introduction}\label{sec:Intro}

Our main objects of study are the  discrete Laguerre operators
\be\label{eq:H0}
H_\alpha := \begin{pmatrix} 
1+\alpha & -\sqrt{1+\alpha} & 0 & \cdots \\[1mm]
-\sqrt{1+\alpha} & 3 + \alpha & -\sqrt{2(2+\alpha)}  & \ddots \\[1mm]
0 & -\sqrt{2(2+\alpha)} & 5 + \alpha  & \ddots \\[1mm]
0 & 0 & -\sqrt{3(3+\alpha)} &  \ddots \\
\vdots&\ddots&\ddots&\ddots
\end{pmatrix},\quad \alpha>-1,
\ee
acting in $\ell^2(\Z_{\ge0})$. Explicitly, $H_\alpha = \big(h^{(\alpha)}_{n,m}\big)_{n,m\ge0}$ with $h^{(\alpha)}_{n,m}=0$ if $|n-m|>1$ and 
\begin{align*}
h^{(\alpha)}_{n,n} = 2n+1 + \alpha,\quad  h^{(\alpha)}_{n,n+1} = h_{n+1,n}^{(\alpha)}= - \sqrt{(n+1)(n+1+\alpha)},\quad n\in \Z_{\ge0}.
\end{align*}
It is a special case of a self-adjoint Jacobi operator whose generalized eigenfunctions are precisely the Laguerre polynomials $L_n^{(\alpha)}$,
explaining the name for \eqref{eq:H0}.

The operator $H_\alpha$ features prominently in the study of nonlinear waves in $(2+1)$-dimensional noncommutative scalar field theory \cite{a06, a13, gms}. The coefficient $\alpha$ in \eqref{eq:H0} can be seen as a measure of the delocalization of the field configuration and it is related to the planar angular momentum \cite{a13}. In particular, $\alpha=0$ corresponds to spherically symmetric waves and it has attracted further interest in \cite{cfw03,ks15a,ks15b,ks15c}, where $H_0$ appears as the linear part in the nonlinear Schr\"odinger equation \cite{ks15a,ks15b,ks15c}.
Thus dispersive estimates for the unitary evolution play a crucial role in the understanding of stability of soliton manifolds appearing in these models. It turned out (see \cite{kt16,kkt18}) that the unitary evolution $\E^{\I t H_\alpha}$ can be expressed by means of Jacobi polynomials (see Appendix \ref{app:jacobi} for definitions and basic facts) and this also connects dispersive estimates with uniform weighted estimates of Jacobi polynomials on the orthogonality interval (the so-called Bernstein-type inequalities). 

In the present article we focus on the study of the heat semigroup $(\E^{-t H_\alpha})_{t>0}$. Usually (sharp) dispersive estimates (for $\E^{\I t H_\alpha}$ these are obtained in \cite{kkt18,kt16}) do not imply (sharp) heat kernel estimates as the example of the free Hamiltonian shows. Namely, let $J_0$ be defined in $\ell^2(\Z)$ by 
\be 
(J_0u)_n := -u_{n-1} +2u_n - u_{n+1},\quad n\in \Z.
\ee
$J_0$ is a bounded self-adjoint operator, whose spectrum is purely absolutely continuous and coincides with the interval $[0,4]$. The corresponding heat semigroup and the unitary evolution are given by
\begin{align}
\E^{-tJ_0}(n,m) & = \E^{-2t}I_{n-m}(2t), & 
\E^{\I tJ_0}(n,m) & = \E^{2\I t} I_{n-m}(2\I t),
\end{align}
for all $n,m\in\Z$. Here 
\be
I_k(z) = \I^{-k} J_k(\I z) =  \sum_{n=0}^\infty \frac{1}{n! \Gamma(n+k+1)}\left(\frac{z}{2}\right)^{2n+k}
\ee
 is the modified Bessel function of the first kind \dlmf{10.25.2} (we use the convention $1/\Gamma(m) =0$ if $m\in\Z_{\le 0}$). This leads to the following bounds
\be
\|\E^{\I tJ_0}\|_{\ell_1\to\ell_\infty} = \sup_{n,m\in\Z} |\E^{\I tJ_0}(n,m)| = \OO(|t|^{-1/3})
\ee
as $t\to\infty$, however, 
\be
\|\E^{-tJ_0}\|_{\ell_1\to\ell_\infty} = \OO(t^{-1/2}),\quad t\to+\infty.
\ee

It is not at all surprising that the heat kernel of $\E^{-t H_\alpha}$ is expressed by means of Jacobi polynomials (Theorem \ref{thm:explicit}). However, now one is led to the study of Jacobi polynomials outside of the orthogonality interval. 
Let us next briefly outline the structure of the paper and the main results.

Section \ref{sec:II} is of preliminary character, where we recall the definition of $H_\alpha$ and its basic spectral properties. 

In Section \ref{sec:III} we investigate the quadratic form $\gt_\alpha$ associated with $H_\alpha$. 
Using a convenient factorization of the matrix \eqref{eq:H0} (which connects $H_\alpha$ with the spectral theory of Krein strings, see Remark \ref{rem:string}), we are able to perform a rather detailed study of $\gt_\alpha$ (Lemma \ref{lem:form}). Using the Beurling--Deny criteria, this helps us to conclude that the heat semigroup $\E^{-tH_\alpha}$ is positivity preserving. Moreover, it is Markovian if $\alpha=0$ (that is, $\E^{-t H_0}$ is also $\ell^\infty$ contractive). The string factorization also shows that a very simple similarity transformation \eqref{eq:wtHalpha} connects $H_\alpha$ with the difference operator $\wt{H}_\alpha$, which is Markovian, however, acts in a weighted $\ell^2$ space.  

We investigate heat semigroups $\E^{-tH_\alpha}$ and $\E^{-t\wt{H}_\alpha}$ in Section \ref{sec:IV}. First, we compute explicitly the corresponding heat kernels (Theorem \ref{thm:explicit}).  On the one hand, the connection with Jacobi polynomials enables us to obtain the on-diagonal estimates for the heat kernels (Theorem \ref{thm:HeatEst}). On the other hand, this allows us to show that the continuous time random walk on $\Z_{\ge 0}$ generated by $\wt{H}_\alpha$ is recurrent exactly when $\alpha>0$. Moreover, it is stochastically complete for all $\alpha>-1$. It is interesting to mention that the latter is a consequence of the formula for the generating function of Meixner polynomials (see Remark \ref{rem:heat=Meixner} and Lemma \ref{lem:RW}). Let us stress in this connection that orthogonality relations for Meixner polynomials are equivalent to the unitarity of $\E^{-\I tH_\alpha}$ (see \cite[Remark 3.2]{kkt18}). 

In the final Section \ref{sec:V} we study the negative spectrum of perturbations $H_{\alpha,V}$ of $H_\alpha$. Rank one perturbations of $H_\alpha$ enjoy a very detailed treatment (Lemma \ref{lem:rank1est}). This has several consequences. First of all, for $\alpha\in (-1,0]$ this immediately implies that no matter how small the attractive perturbation $V$ is, it always produces a non-empty negative spectrum (i. e., the presence of a zero energy resonance for $\alpha\in (-1,0]$, cf. \eqref{eq:Greenat0}, which can also be seen as another instance of recurrence). For $\alpha >0$, we can show that for sufficiently small attractive perturbations $V$, the negative spectrum of $H_{\alpha,V}$ remains empty. The qualitative measure of ``smallness" is demonstrated by the optimal Hardy inequality (Theorem \ref{th:Hardy}) as well as by two estimates \eqref{eq:CLR} and \eqref{eq:bargm}. The latter is the analog of the Bargmann bound for 1D Schr\"odinger operators. The former is the analog of the Cwikel--Lieb--Rosenblum bound and it actually follows from the ultracontractivity estimate \eqref{eq:HeatEst1} (indeed, by theorem of Varopoulos, \eqref{eq:HeatEst1} is equivalent to the Sobolev-type inequality \eqref{eq:Sobalpha}, which is known to be further equivalent to a CLR-type bound, \cite{ls97,fls}). Let us stress that the optimal constant $C(\alpha)$ in \eqref{eq:CLR} remains an open problem.  In conclusion let us mention that all the above results resemble a strong similarity between discrete Laguerre operators and 1D radial Schr\"odinger operators (for instance, one may interpret \eqref{eq:CLR} as a discrete analog of the Glaser--Gr\"osse--Martin--Thirring bound \cite[Theorem XIII.9(c)]{rsIV}).  

\subsection*{Notation}
$\R$ and $\C$ have the usual meaning; $\R_{>0} := (0,\infty)$,  $\R_{\ge 0} := [0,\infty)$, and $\Z_{\ge a} := \Z\cap [a,\infty)$ for any $a\in\R$.

By $\Gamma$ is denoted the classical gamma function \dlmf{5.2.1}.
For $x\in\C$ and $n\in\Z_{\ge0}$
\be\label{K6}
(x)_n:=x(x+1)\cdots(x+n-1)\quad(n>0),\quad (x)_0:=1;\quad
\binom{n+x}{n} := \frac{(x+1)_n}{n!}
\ee
denote the {\em Pochhammer symbol} \dlmf{5.2.4}
and the {\em binomial coefficient}, respectively. Notice that for $-x\notin \Z_{\ge0}$
\begin{align*}
(x)_n = \frac{\Gamma(x+n)}{\Gamma(x)},\qquad 
\binom{n+x}{n} = \frac{\Gamma(x+n+1)}{\Gamma(x+1)\Gamma(n+1)}\,.
\end{align*}
Moreover, the above formulas allow to define the Pochhammer symbol and the binomial coefficient for noninteger $x$, $n>0$. 
Finally, for $-c \notin \Z_{\ge0}$
the
{\em Gauss hypergeometric function} \dlmf{15.2.1} is defined by 
\be\label{K7}
\hyp21{a,b}{c}{z} := \sum_{k=0}^\infty \frac{(a)_k(b)_k}{(c)_k k!}\,z^k
\quad\mbox{($|z|<1$ or else $-a$ or $-b\in \Z_{\ge0}$).}
\ee

For a sequence of positive reals $\sigma =( \sigma_n)_{n\ge 0} \subset \R_{>0}$ and $p\in [1,\infty)$, we denote by $\ell^p(\sigma) = \ell^p(\Z_{\ge 0};\sigma)$ the usual weighted Banach space of sequences $u = (u_n)_{n\ge 0}\subset \C$ such that 
\begin{align*}
\|u\|_{\ell^p(\sigma)} = \Big( \sum_{n\ge 0} |u_n|^p \sigma_n \Big)^{1/p} < \infty.
\end{align*}
If $p=\infty$, then the corresponding norm is given by
\begin{align*}
\|u\|_{\ell^\infty(\sigma)} =\sup_{n\ge 0} |u_n| \sigma_n.
\end{align*}
We shall simply write $\ell^p = \ell^p(\Z_{\ge 0})$ if $\sigma = \id$. 
Finally, $\delta_n = (\delta_{n,k})_{k\ge 0}$,  $n\in \Z_{\ge 0}$, is the standard orthonormal basis in $\ell^2(\Z_{\ge 0})$, where $\delta_{n,k}$ is Kronecker's delta.

\section{The discrete Laguerre operator}\label{sec:II} 

We start with a precise definition of the operator $H_\alpha$. 
 For a sequence $u=(u_n)_{n\ge 0}$ we define the difference expression $\tau_\alpha\colon u\mapsto \tau_\alpha u$ by setting
\be\label{eq:tau}
(\tau_\alpha u)_n : = - \sqrt{n(n+\alpha)}\,u_{n-1} + (2n+1 +\alpha)u_n -
\sqrt{(n+1)(n+1+\alpha)}\,u_{n+1},
\ee
for all $n\in\Z_{\ge 0}$, where $u_{-1}:=0$ for notational simplicity. 
Then the operator $H_\alpha$ associated with the Jacobi matrix \eqref{eq:H0} is defined by
\begin{align}\begin{split}
H_\alpha\colon \begin{array}[t]{lcl} \mathcal{D}_{\max} &\to& \ell^2(\Z_{\ge0}) \\ u &\mapsto& \tau_\alpha u\ , \end{array}
\end{split}\end{align}
where 
\be
 \mathcal{D}_{\max} = \{u\in \ell^2(\Z_{\ge0})\ |\, \tau_\alpha u\in \ell^2(\Z_{\ge0})\}.
\ee
Notice that $\mathcal{D}_{\max}$ does not depend on $\alpha$, however, seems, a closed description of $\mathcal{D}_{\max}$ is a rather complicated task.

Spectral properties of $H_\alpha$ are well known. Let us briefly describe them. First of all, the Carleman test (see, e.g., \cite[p.~24]{akh}) implies that $H_\alpha$ is self-adjoint. Moreover, the polynomials of the first kind for \eqref{eq:tau} are given by (see \cite[(5.1.10)]{sz})
\be\label{eq:P=L}
P_{\alpha,n}(z) := 
\frac{1}{\sigma_\alpha(n)}L_n^{(\alpha)}(z),\quad n{\ge0},
\ee
where 
\be\label{eq:sigma_a}
\sigma_\alpha(n):=\sqrt{L_n^{(\alpha)}(0)} = \binom{n+\alpha}{n}^{1/2},\qquad n{\ge0},\quad \alpha>-1,
\ee
and $L_n^{(\alpha)}$ are the Laguerre polynomials
\cite[Section 5.1]{sz}:
\be\label{eq:laguerpol}
L_n^{(\alpha)}(z) = \frac{\E^{z}z^{-\alpha}}{n!}\frac{d^n}{dz^n}\E^{-z} z^{n+\alpha} 
=\binom{n+\alpha}{n} \sum_{k=0}^n\frac{(-n)_k}{(\alpha+1)_k\,k!}\,z^k,\quad n{\ge0}.
\ee
Orthogonality relations for $P_{\alpha,n}$ are given by (see \cite[(5.1.1)]{sz})
\be\label{eq:lagorth}
\frac1{\Gamma(\alpha+1)}
\int_{0}^\infty P_{\alpha,n}(\lambda) P_{\alpha,k}(\lambda) \E^{-\lambda}\lambda^\alpha\, d\lambda 
= \delta_{n,k},\quad n,k\in\Z_{\ge0}.
\ee
Therefore, the probability measure
\be
\rho_\alpha(d\lambda) = \frac{1}{\Gamma(\alpha+1)}\id_{\R_{>0}}(\lambda)\E^{-\lambda}\lambda^\alpha d\lambda
\ee
is the spectral measure of $H_\alpha$, that is, $H_\alpha$ is unitarily equivalent to a multiplication operator in $L^2(\R_{>0};\rho_\alpha)$. Indeed, the map $\cF_\alpha\colon \ell^2(\Z_{\ge 0})\to L^2(\R_{>0};\rho_\alpha)$ defined by
\be\label{eq:Falpha}
(\cF_\alpha f)(\lambda):=\sum_{n\ge 0}f_n P_{\alpha,n}(\lambda) ,\quad \lambda>0,
\ee
for all $f\in \ell^2_c(\Z_{\ge 0})$, extends to an isometric isomorphism. Its inverse 
is given by
\begin{align*}
(\cF_\alpha^{-1}F)_n = \frac{1}{\Gamma(\alpha+1)}\int_{0}^\infty F(\lambda) P_{\alpha,n}(\lambda)\E^{-\lambda}\lambda^\alpha\, d\lambda,\quad n{\ge 0},
\end{align*}
for every $F\in L^2_c(\R_{>0};\rho_\alpha)$. Then 
\be\label{eq:Halpha=M}
H_\alpha = \cF_\alpha^{-1} M_\alpha \cF_\alpha,
\ee
 where $M_\alpha$ is the multiplication operator 
\begin{align*}
M_\alpha\colon F(\lambda)\mapsto \lambda F(\lambda).
\end{align*}
acting in the Hilbert space $L^2(\R_{>0};\rho_\alpha)$. This in particular implies that $H_\alpha$ is a positive operator and its spectrum $\sigma(H_\alpha)$ coincides with $[0,\infty)$. Moreover, $\sigma(H_\alpha)$ is purely absolutely continuous of multiplicity $1$.

The Stieltjes transform of $\rho_\alpha$, which is usually called the {\em Weyl function} (or $m$-function) of $H_\alpha$, is given by
\be\label{eq:malpha}
m_\alpha(z)  = \frac{1}{\Gamma(\alpha+1)}\int_0^{+\infty} \frac{\E^{-\lambda}\lambda^\alpha}{\lambda-z}d\lambda=
\E^{-z} { E}_{1+\alpha}(-z) ,\quad z\in\C\setminus \R_{\ge0},
\ee
where 
\be\label{eq:Ei}
E_p(z) := z^{p-1} \int_z^\infty \E^{-t} t^{-p} dt = z^{p-1} \Gamma(1-p,z)
\ee
denotes the principal value of the generalized exponential integral \dlmf{8.19.2} and $\Gamma(s,z)$ is the incomplete Gamma function \dlmf{8.2.2}. 
Note that
\be\label{eq:mat0}
m_\alpha(-0):=\lim_{x\downarrow 0}m_\alpha(-x) = \begin{cases} 1/\alpha, & \alpha>0, \\+\infty, & \alpha\in (-1,0]. \end{cases}
\ee

Next, let us define the polynomials of the second kind (see \cite{akh}):
\be\label{eq:seckind}
Q_{\alpha,n}(z) := \frac{1}{\Gamma(\alpha+1)}\int_0^\infty \frac{P_{\alpha,n}(z)-P_{\alpha,n}(\lam)}{z-\lam} \E^{-\lam}\lambda^{\alpha}d\lam,\qquad n\ge 0,
\ee
where $Q_{\alpha,0}(z)\equiv 0$ and $Q_{\alpha,1}(z)\equiv \frac{1}{\sqrt{\alpha+1}}$.
Then $u:= (Q_{\alpha,n}(z))_{n{\ge0}}$ satisfies $(\tau_\alpha u)_n = zu_n$
for all $n\ge 1$. 
Notice that for all $z\in \C\setminus \R_{\ge0}$ the linear combination
\be\label{eq:weylsol}
\Psi_{\alpha,n}(z):= Q_{\alpha,n}(z) + m_\alpha(z) P_{\alpha,n}(z),\quad n{\ge0}, 
\ee
also known as the {\em Weyl solution} in the Jacobi operators context, satisfies 
\be
(\Psi_{\alpha,n}(z))_{n\ge 0} \in \ell^2(\Z_{\ge0})
\ee 
for all $z\in \C\setminus \R_{\ge0}$. 
In particular, this provides us with the explicit expression of the resolvent of $H_\alpha$ (actually, with its Green's function)
\be\label{eq:H0resolv}
G_\alpha(z;n,m):= \big\langle (H_\alpha - z)^{-1}\delta_n ,\delta_m \big\rangle_{\ell^2}  = \begin{cases} P_{\alpha,n}(z) \Psi_{\alpha,m}(z), & n\le m,\\ P_{\alpha,m}(z) \Psi_{\alpha,n}(z), & n\ge m. \end{cases}
\ee

\begin{lemma}\label{lem:Greenat0}
For $n,m\in\Z_{\ge 0}$,
\be\label{eq:Greenat0}
G_\alpha(-0;n,m) := \lim_{x\uparrow-0} G_\alpha(x;n,m) = \begin{cases} \frac{1}{\alpha}\frac{\sigma_{\alpha}(\min(n,m))}{\sigma_{\alpha}(\max(n,m))}, & \alpha>0, \\[2mm] +\infty, & \alpha\in (-1,0]. \end{cases}
\ee
\end{lemma}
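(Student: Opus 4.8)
The plan is to compute the limit $x\uparrow -0$ in the explicit formula \eqref{eq:H0resolv} for the Green's function, using the representation \eqref{eq:weylsol} of the Weyl solution together with the known behavior of the constituent functions at $z=0$. Since $G_\alpha(x;n,m) = P_{\alpha,\min(n,m)}(x)\,\Psi_{\alpha,\max(n,m)}(x)$ and $\Psi_{\alpha,n}(z) = Q_{\alpha,n}(z) + m_\alpha(z)P_{\alpha,n}(z)$, the only term that can blow up is $m_\alpha(z)$. By \eqref{eq:mat0} we have $m_\alpha(-0) = 1/\alpha$ for $\alpha>0$ and $m_\alpha(-0)=+\infty$ for $\alpha\in(-1,0]$, while $P_{\alpha,n}$ and $Q_{\alpha,n}$ are polynomials, hence continuous at $0$.

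First I would record the values at $z=0$. From \eqref{eq:P=L} and \eqref{eq:laguerpol} one has $L_n^{(\alpha)}(0) = \binom{n+\alpha}{n}$, so
\be\label{eq:Pat0}
P_{\alpha,n}(0) = \frac{L_n^{(\alpha)}(0)}{\sigma_\alpha(n)} = \frac{\binom{n+\alpha}{n}}{\binom{n+\alpha}{n}^{1/2}} = \binom{n+\alpha}{n}^{1/2} = \sigma_\alpha(n),
\ee
which in particular is strictly positive for every $n$ and every $\alpha>-1$. Next, the polynomials of the second kind at $z=0$: from \eqref{eq:seckind},
\be\label{eq:Qat0}
Q_{\alpha,n}(0) = \frac{1}{\Gamma(\alpha+1)}\int_0^\infty \frac{P_{\alpha,n}(\lam)-P_{\alpha,n}(0)}{\lam}\,\E^{-\lam}\lambda^\alpha\,d\lam,
\ee
which is a finite number for each $n$ (the integrand is bounded near $\lambda=0$ because $P_{\alpha,n}(\lam)-P_{\alpha,n}(0) = O(\lambda)$, and decays rapidly at infinity).

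Now I would split into the two cases. For $\alpha\in(-1,0]$: here $P_{\alpha,\min(n,m)}(0) = \sigma_\alpha(\min(n,m)) > 0$ and $m_\alpha(x)\to+\infty$ as $x\uparrow-0$, while $Q_{\alpha,\max(n,m)}(x)$ and $P_{\alpha,\max(n,m)}(x)$ stay bounded with $P_{\alpha,\max(n,m)}(0)>0$; hence $\Psi_{\alpha,\max(n,m)}(x)\to+\infty$ and therefore $G_\alpha(x;n,m)\to+\infty$, giving the second branch. For $\alpha>0$: all four quantities have finite limits, so
\be
G_\alpha(-0;n,m) = P_{\alpha,\min(n,m)}(0)\Big(Q_{\alpha,\max(n,m)}(0) + \tfrac{1}{\alpha}P_{\alpha,\max(n,m)}(0)\Big).
\ee
The task is then to show that this collapses to $\tfrac{1}{\alpha}\,\sigma_\alpha(\min(n,m))/\sigma_\alpha(\max(n,m))$. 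The remaining identity to verify is, with $j:=\min(n,m)$, $k:=\max(n,m)$, that the ``off-diagonal'' contribution $\sigma_\alpha(j)\,Q_{\alpha,k}(0) + \tfrac1\alpha\big(\sigma_\alpha(j)\sigma_\alpha(k) - \sigma_\alpha(j)/\sigma_\alpha(k)\big) = 0$, equivalently $\alpha\,Q_{\alpha,k}(0) = \sigma_\alpha(k)^{-1} - \sigma_\alpha(k)$ for every $k\ge 0$ — but in fact the cleaner route is not to prove this identity by hand from \eqref{eq:Qat0} but to observe that $u_n := G_\alpha(-0;n,m)$ must solve $(\tau_\alpha u)_n = 0$ for $n\ne m$ and lie in $\ell^2$, and that the $\ell^2$ solution of $\tau_\alpha u = 0$ is explicitly computable. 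Indeed $\tau_\alpha u = 0$ is a constant-coefficient-type three-term recurrence after the substitution $u_n = \sigma_\alpha(n)^{\pm1}v_n$ (this is exactly the Krein-string factorization alluded to in the introduction), and the two linearly independent solutions of $\tau_\alpha u = 0$ are, up to constants, $u_n^{(1)} = \sigma_\alpha(n) = P_{\alpha,n}(0)$ (the non-$\ell^2$, "regular at $0$" solution) and a second solution $u_n^{(2)}$ behaving like $\sigma_\alpha(n)^{-1}$ at infinity, which is the $\ell^2$ (Weyl) solution $\Psi_{\alpha,n}(-0)$. Matching the Wronskian-type normalization coming from $(H_\alpha+0)^{-1}$ — concretely, using $m_\alpha(-0) = 1/\alpha$ and $P_{\alpha,n}(0) = \sigma_\alpha(n)$ — pins down $\Psi_{\alpha,n}(-0) = \tfrac{1}{\alpha}\sigma_\alpha(n)^{-1}$, and substituting back into \eqref{eq:H0resolv} yields \eqref{eq:Greenat0}.

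The main obstacle I anticipate is the interchange of limit and the objects built from $m_\alpha$: one must justify that $\lim_{x\uparrow-0}G_\alpha(x;n,m)$ may be computed termwise in \eqref{eq:H0resolv} and \eqref{eq:weylsol}, i.e. that $Q_{\alpha,n}(x)\to Q_{\alpha,n}(0)$ and $P_{\alpha,n}(x)\to P_{\alpha,n}(0)$ (immediate, polynomials) and that the behavior of $m_\alpha(x)$ as $x\uparrow-0$ is exactly \eqref{eq:mat0} — which is precisely where the generalized exponential integral $E_{1+\alpha}$ in \eqref{eq:malpha} enters: its value, resp. divergence, at $0$ is classical (for $\alpha>0$, $E_{1+\alpha}(0) = 1/\alpha$; for $\alpha\le 0$, $E_{1+\alpha}$ has a logarithmic or power singularity at $0$). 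Everything else is the algebraic identification described above, which is routine once the two solutions of $\tau_\alpha u = 0$ are written down explicitly via the string substitution $u_n = \sigma_\alpha(n) v_n$.
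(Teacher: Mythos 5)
Your overall skeleton coincides with the paper's: write $G_\alpha(x;n,m)=P_{\alpha,\min}(x)\Psi_{\alpha,\max}(x)$, let $x\uparrow 0$ termwise, use $P_{\alpha,n}(0)=\sigma_\alpha(n)$ and \eqref{eq:mat0}. Your treatment of $\alpha\in(-1,0]$ is complete and identical to the paper's. You also correctly isolate the one nontrivial fact needed for $\alpha>0$, namely $\alpha\,Q_{\alpha,k}(0)=\sigma_\alpha(k)^{-1}-\sigma_\alpha(k)$; the paper proves exactly this by evaluating the integral \eqref{eq:seckind} at $z=0$ using the connection formula $L_m^{(\alpha)}(\lambda)=\sum_{k=0}^m L_k^{(\alpha-1)}(\lambda)$ together with orthogonality of the $L_k^{(\alpha-1)}$ against $\E^{-\lambda}\lambda^{\alpha-1}$.

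The gap is that you decline to prove this identity and instead propose to identify $\Psi_{\alpha,\cdot}(-0)$ among the solutions of $\tau_\alpha u=0$, and neither of the two selection criteria you offer actually works. First, the ``$\ell^2$ solution'' criterion fails at the spectral endpoint: the candidate second solution satisfies $u_n^{(2)}=\tfrac{1}{\alpha}\sigma_\alpha(n)^{-1}$ with $\sigma_\alpha(n)^{-2}\sim \Gamma(\alpha+1)\,n^{-\alpha}$, so $u^{(2)}\notin\ell^2(\Z_{\ge 0})$ for all $\alpha\in(0,1]$; since $z=0$ lies on the boundary of the (purely a.c.) spectrum, there is simply no $\ell^2$ solution there in that range, and $\Psi_{\alpha,\cdot}(-0)$ cannot be characterized this way. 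Second, the Wronskian normalization cannot pin $\Psi(-0)$ down either: $W(P,\Psi)=W(P,Q)=1$ is invariant under $\Psi\mapsto\Psi+cP$, so it only determines $\Psi(-0)$ modulo multiples of $\sigma_\alpha(\cdot)$ --- which is precisely the ambiguity you need to resolve. To close the argument you would need either an a priori decay statement such as $\Psi_{\alpha,n}(-0)\to 0$ as $n\to\infty$ (itself requiring proof, e.g.\ via the monotone-convergence representation $G_\alpha(-0;n,m)=\tfrac{1}{\Gamma(\alpha+1)}\int_0^\infty P_{\alpha,n}(\lambda)P_{\alpha,m}(\lambda)\E^{-\lambda}\lambda^{\alpha-1}\,d\lambda$), or simply the direct computation of $Q_{\alpha,m}(0)$ that you set aside. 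As written, the $\alpha>0$ case is not established.
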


\begin{proof}
Since $H_\alpha$ is self-adjoint, $G_\alpha(x;n,m) = G_\alpha(x;m,n)$ for any $x<0$, so suppose $n\le m$. By \eqref{eq:H0resolv},
\be\label{eq:Greenat0_01}
G_\alpha(-0;n,m) = \sigma_\alpha(n)\big(Q_{\alpha,m}(0) + \sigma_\alpha(m) m_\alpha(-0)\big),
\ee
and hence \eqref{eq:mat0} implies \eqref{eq:Greenat0} for $\alpha\in(-1,0]$.  
If $\alpha>0$, we get by using \eqref{eq:seckind},
\begin{align*}
Q_{\alpha,m}(0) 
&= \frac{1}{\Gamma(\alpha+1)\sigma_\alpha(m)}\int_0^\infty \frac{L^{(\alpha)}_m(\lam) - L^{(\alpha)}_m(0)}{\lam} \E^{-\lam}\lambda^{\alpha}d\lam \\
& = \frac{1}{\Gamma(\alpha+1)\sigma_\alpha(m)}\int_0^\infty \big(\sum_{k=0}^m L^{(\alpha-1)}_k(\lam) - L^{(\alpha)}_m(0)\big) \E^{-\lam}\lambda^{\alpha-1}d\lam \\
& = \frac{1}{\Gamma(\alpha+1)\sigma_\alpha(m)}\int_0^\infty \big( L^{(\alpha-1)}_0(\lam) - L^{(\alpha)}_m(0)\big) \E^{-\lam}\lambda^{\alpha-1}d\lam \\
& = \frac{1}{\Gamma(\alpha+1)\sigma_\alpha(m)}\Gamma(\alpha) (1 - \sigma_\alpha(m)^2) \\
& = \frac{1 - \sigma_\alpha(m)^2}{\alpha\sigma_\alpha(m)}.
\end{align*}
Here in the second line we used \dlmf{18.18.37} and then orthogonality of the Laguerre polynomials \eqref{eq:lagorth}. It remains to plug the last expression into \eqref{eq:Greenat0_01}.
\end{proof}

\section{The quadratic form}\label{sec:III}

Let us consider the quadratic form corresponding to the operator $H_\alpha$:
\begin{align}\label{eq:gta0}
\gt_\alpha^0[u] & : = \langle H_\alpha u, u \rangle_{\ell^2}, & u\in \dom(\gt_\alpha^0) & := \dom(H_\alpha).
\end{align}
This form is positive since so is $H_\alpha$. Since $H_\alpha$ is self-adjoint, $\gt_\alpha^0$ is closable and its closure $\gt_\alpha$ is explicitly given by
\begin{align}\label{eq:gta}
\gt_\alpha[u] & = \|\sqrt{H_\alpha} u\|^2_{\ell^2}, & u\in \dom(\gt_\alpha) & = \dom(\sqrt{H_\alpha}),
\end{align}
where $\sqrt{H_\alpha}$ denotes the positive self-adjoint square root of $H_\alpha$.

\begin{lemma}\label{lem:form}
The domain $\dom(\gt_\alpha)$ of $\gt_\alpha$ does not depend on $\alpha$ and consists of those $u\in \ell^2(\Z_{\ge 0})$ for which the series
\be
\sum_{n\ge 0} (n+1)\big| u_n - u_{n+1}\big|^2
\ee
is finite. Moreover, for every $u\in\dom(\gt_\alpha)$ the form $\gt_\alpha$ admits the representation 
\be\label{eq:gtafact}
\gt_\alpha[u] = \sum_{n\ge 0} \big| \sqrt{n+\alpha+1} u_n - \sqrt{n+1} u_{n+1}\big|^2.
\ee
\end{lemma}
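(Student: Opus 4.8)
The plan is to exhibit the factorization $H_\alpha = A_\alpha^*A_\alpha$ hinted at in the introduction and then invoke the first representation theorem for quadratic forms. Define the operator $A_\alpha$ in $\ell^2(\Z_{\ge0})$ by
\be
(A_\alpha u)_n := \sqrt{n+\alpha+1}\,u_n - \sqrt{n+1}\,u_{n+1},\qquad n\ge 0,
\ee
on the maximal domain $\dom(A_\alpha) := \{u\in\ell^2(\Z_{\ge0}) : A_\alpha u\in\ell^2(\Z_{\ge0})\}$. It is densely defined since $\ell^2_c\subset\dom(A_\alpha)$, and it is closed: if $u^{(k)}\to u$ and $A_\alpha u^{(k)}\to v$ in $\ell^2$, then convergence is also coordinatewise, so $v_n = \sqrt{n+\alpha+1}\,u_n - \sqrt{n+1}\,u_{n+1}$ for every $n$, i.e. $u\in\dom(A_\alpha)$ and $A_\alpha u = v$. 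A summation by parts (pairing $\langle A_\alpha u, A_\alpha\varphi\rangle$ with $\varphi\in\ell^2_c$ and collecting the coefficient of each $\bar\varphi_n$) shows that $A_\alpha^*A_\alpha u = \tau_\alpha u$ for every $u\in\dom(A_\alpha^*A_\alpha)$ — this is exactly the algebraic identity underlying \eqref{eq:H0}, with the conventions $\sqrt{0}=0$, $u_{-1}=0$ absorbing the boundary term at $n=0$. Consequently $A_\alpha^*A_\alpha\subseteq H_\alpha$ (both act via $\tau_\alpha$, and $\dom(H_\alpha)$ is the full maximal domain); since $A_\alpha^*A_\alpha$ is self-adjoint by von Neumann's theorem and $H_\alpha$ is self-adjoint by the Carleman test, the two operators coincide, $H_\alpha = A_\alpha^*A_\alpha$.

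By the first representation theorem, the quadratic form of $H_\alpha = A_\alpha^*A_\alpha$ — which is precisely $\gt_\alpha$ (the closure of $\gt_\alpha^0$, cf. \eqref{eq:gta}) — is the closed form $u\mapsto\|A_\alpha u\|_{\ell^2}^2$ with form domain $\dom(A_\alpha)$. This is the representation \eqref{eq:gtafact}. Alternatively, one can bypass the factorization: the form $u\mapsto\|A_\alpha u\|^2$ on $\dom(A_\alpha)$ is closed by the same coordinatewise argument, it agrees with $\gt_\alpha^0$ on $\ell^2_c$, and $\ell^2_c$ is a form core for $\gt_\alpha$; for the latter one checks that every $u\in\dom(A_\alpha)$ is approximated in the form norm by its truncations $(u_0,\dots,u_N,0,\dots)$, using that $\liminf_{n\to\infty}n|u_n|^2=0$ for any $u\in\ell^2$ to kill the boundary term $(N+1)|u_{N+1}|^2$.

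It remains to identify $\dom(A_\alpha)$. Writing $A_\alpha = A_0 + M_\alpha$, where $M_\alpha$ is multiplication by the sequence $\sqrt{n+\alpha+1}-\sqrt{n+1} = \alpha\big(\sqrt{n+\alpha+1}+\sqrt{n+1}\big)^{-1}$, which is bounded for $\alpha>-1$ and $n\ge0$, we see that $M_\alpha$ is a bounded operator on $\ell^2(\Z_{\ge0})$. Hence $A_\alpha u\in\ell^2$ iff $A_0 u\in\ell^2$, so $\dom(A_\alpha) = \dom(A_0)$ for all $\alpha>-1$; and since $(A_0u)_n = \sqrt{n+1}\,(u_n-u_{n+1})$, this common domain equals $\{u\in\ell^2(\Z_{\ge0}) : \sum_{n\ge0}(n+1)|u_n-u_{n+1}|^2<\infty\}$, as claimed.

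The only genuinely delicate point is the identity $H_\alpha = A_\alpha^*A_\alpha$ at the level of domains — a priori $A_\alpha^*A_\alpha$ could be a proper restriction of $H_\alpha$. I expect this to be the main obstacle, but it dissolves once one observes that both operators are self-adjoint, so no explicit description of $\dom(A_\alpha^*A_\alpha)$ or $\dom(H_\alpha)$ is needed; what is really used is the essential self-adjointness of $H_\alpha$ on $\ell^2_c$ (limit point case), which rests on the linear growth of the off-diagonal entries of \eqref{eq:H0}.
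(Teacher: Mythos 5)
Your proof is correct and rests on the same factorization as the paper's: your operator $A_\alpha$ is exactly the paper's $\cW_\alpha^{-1/2}(I-\cS^\ast)\cL_\alpha^{-1}$ from the string factorization \eqref{eq:Hfactor}, and the computation of $\|A_\alpha u\|^2$ on $\ell^2_c$ and the boundedness of $\sqrt{n+\alpha+1}-\sqrt{n+1}$ for the $\alpha$-independence of the domain are both identical to the paper's steps. The one place where you genuinely diverge is the identification of the maximal form with the closure of $\gt_\alpha^0$: the paper argues that the maximally defined form is closed and then appeals to uniqueness of the closed extension of $\gt_\alpha^0$ (justified by self-adjointness of the maximal operator $H_\alpha$), whereas you prove $A_\alpha^\ast A_\alpha\subseteq H_\alpha$ and conclude equality from maximality of self-adjoint operators, after which the representation $\gt_{A_\alpha^\ast A_\alpha}[u]=\|A_\alpha u\|^2$ on $\dom(A_\alpha)$ is a standard fact for closed $A_\alpha$. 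Your route is, if anything, the more airtight one at this point: ``a closed positive form extending $\gt^0_T$ with $T$ self-adjoint must be $\gt_T$'' is false as a general principle (Dirichlet versus Neumann forms on an interval give a counterexample), so the paper's one-line justification implicitly relies on limit-point-type information that your argument makes explicit through $A_\alpha^\ast A_\alpha = H_\alpha$. No gaps.
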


\begin{proof}
Observe that the matrix \eqref{eq:H0} admits ``the string factorization" (see, e.g., \cite[Appendix]{akh},  \cite[\S 13]{kakr74}, \cite[\S 3]{ek18}): 
\begin{align}\label{eq:Stifact}
h_{n,n}^{(\alpha)} & = \frac{1}{l_{\alpha}(n)}\left(\frac{1}{\omega_{\alpha}(n-1)} + \frac{1}{\omega_{\alpha}(n)}\right), & 
h_{n,n+1}^{(\alpha)} & = \frac{-1}{\omega_{\alpha}(n) \sqrt{l_{\alpha}(n)l_{\alpha}(n+1)}},
\end{align}
 where $\frac{1}{\omega_{\alpha}(-1)}:=0$  and 
\begin{align}\label{eq:la}
l_{\alpha}(n) & = |P_{\alpha,n}(0)|^2 = \sigma_\alpha(n)^2 = \frac{(\alpha+1)_n}{n!}, \\ 
\omega_{\alpha}(n) & =  -\frac{1}{h_{n,n+1}^{(\alpha)}\sqrt{l_{\alpha} (n)l_{\alpha}(n+1)}} 
 = \frac{n!}{(\alpha+1)_{n+1}}, \quad n\ge 0. \label{eq:wa}
\end{align}
Therefore, the Jacobi matrix \eqref{eq:H0} can be (at least formally) written as
\be\label{eq:Hfactor}
H_\alpha = \cL_\alpha^{-1} (I-\cS) \cW_\alpha^{-1} (I-\cS^\ast) \cL_\alpha^{-1},
\ee
where $\cW_\alpha$ and $\cL_\alpha$ are the multiplication operators 
\begin{align}\label{eq:La}
\cL_\alpha\colon & (u_n)_{n\ge 0} \mapsto (\sigma_\alpha(n)u_n)_{n\ge 0}, & \cW_\alpha\colon & (u_n)_{n\ge0} \mapsto (\omega_{\alpha}(n)u_n)_{n\ge0},
\end{align}
$\cS$ is the shift operator $\cS\colon (u_n)_{n\ge 0}  \mapsto   (u_{n-1})_{n\ge 0}$ with the standard convention $u_{-1}:=0$, and $\cS^\ast$  is the backward shift, $\cS^\ast \colon (u_n)_{n\ge 0}  \mapsto   (u_{n+1})_{n\ge 0}$. 
The representation \eqref{eq:Hfactor} immediately implies
\begin{align*}
\gt_\alpha^0[u]  = \langle H_\alpha u, u \rangle_{\ell^2} & = \|\cW_\alpha^{-1/2} (I-\cS^\ast) \cL_\alpha^{-1} u\|_{\ell^2}^2 \\
& = \sum_{n\ge 0} \frac{1}{\omega_{\alpha}(n)}\Big|\frac{u_n}{\sigma_\alpha(n)} - \frac{u_{n+1}}{\sigma_\alpha(n+1)}\Big|^2\\
& = \sum_{n\ge 0} \big| \sqrt{n+\alpha+1} u_n - \sqrt{n+1} u_{n+1}\big|^2,
\end{align*}
for every $u\in \ell^2_c(\Z_{\ge 0})$. 

Consider now the maximally defined form $\gt_\alpha$, i.e., $\gt_\alpha$ is defined by the RHS in \eqref{eq:gtafact} on sequences $u\in \ell^2(\Z_{\ge 0})$ for which the RHS in \eqref{eq:gtafact} is finite. It is standard to show that this form is positive and closed in $\ell^2(\Z_{\ge 0})$. However, $\gt_\alpha$ is clearly an extension of the pre-minimal form $\gt^0_\alpha$. However, the maximally defined operator $H_\alpha$ is self-adjoint and hence $\gt^0_\alpha$ admits a unique closed extension. Thus $\gt_\alpha = \overline{\gt_\alpha^0}$. 

Finally, to show that $\dom(\gt_\alpha) = \dom(\gt_0)$ for all $\alpha>-1$ it suffices to notice that 
\begin{align*}
\big|\sqrt{n+\alpha+1} - \sqrt{n+1}\big| = \frac{|\alpha|}{\sqrt{n+\alpha+1} + \sqrt{n+1}} \le \frac{|\alpha|}{\sqrt{n+1}}
\end{align*}
for all $n\ge0$ and $\alpha>-1$. 
\end{proof}

\begin{remark}\label{rem:string}
It was observed by Mark Krein \cite[\S 13]{kakr74} that spectral theory of Jacobi matrices admitting factorization \eqref{eq:Stifact} can be included into the spectral theory of Krein strings. Indeed, setting 
\begin{align*}
x_{-1}:=0,\qquad x_n = x_\alpha(n):=  \sum_{k=0}^{n}l_\alpha(k) = \sum_{k=0}^{n}\frac{(\alpha+1)_k}{k!},\qquad n\ge 0, 
\end{align*}
and introducing a measure $\omega_\alpha$ on $[0,\infty)$ by 
\begin{align*}
\omega([0,x)) = \sum_{x_n<x} \omega_\alpha(n) = \sum_{x_n<x} \frac{n!}{(\alpha+1)_{n+1}}, \quad x\ge 0,
\end{align*}  
the difference equation $\tau_\alpha u = z u$ describes small oscillations of a string of infinite length which carries only point masses $\omega_\alpha(n)$ at $x_n$. Moreover, $\tau_\alpha u = z u$ can be turned into the string spectral problem (the so-called Krein--Stieltjes string)
\begin{align*}
- y'' = z\omega_\alpha y\quad \text{on}\quad [0,\infty).
\end{align*} 
\end{remark}

\begin{corollary}
The operator $H_\alpha$ generates a positivity preserving semigroup $\E^{-tH_\alpha}$, $t>0$. Moreover, for $\alpha=0$, the corresponding semigroup is Markovian.
\end{corollary}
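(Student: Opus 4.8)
The plan is to verify the Beurling--Deny criteria for the closed form $\gt_\alpha$ using the explicit representation \eqref{eq:gtafact} obtained in Lemma~\ref{lem:form}. Recall that a positive closed form $\gt$ in a real $L^2$-space generates a positivity preserving semigroup precisely when $u\in\dom(\gt)$ implies $|u|\in\dom(\gt)$ and $\gt[|u|]\le\gt[u]$ (first Beurling--Deny criterion), and that the semigroup is additionally $\ell^\infty$-contractive (hence Markovian) when, in addition, $u\in\dom(\gt)$ implies $u_{\wedge}:=\min(u,\id)\in\dom(\gt)$ and $\gt[u_\wedge]\le\gt[u]$ (second Beurling--Deny criterion). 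So everything reduces to pointwise inequalities for the summand $\big|\sqrt{n+\alpha+1}\,a - \sqrt{n+1}\,b\big|^2$ as a function of the real pair $(a,b)$.

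First I would record the arithmetic fact that for fixed nonnegative reals $p,q$ and any real $a,b$ one has $\big|\,p|a| - q|b|\,\big| \le \big|\,pa - qb\,\big|$ (this is immediate: squaring, it amounts to $-pq|a||b|\le -pq\,ab$, i.e. $ab\le|a||b|$). Applying this with $p=\sqrt{n+\alpha+1}$, $q=\sqrt{n+1}$ and summing over $n$ gives $\gt_\alpha[|u|]\le\gt_\alpha[u]$ for real $u$, together with the fact that finiteness of the right-hand side forces finiteness of the left, so $|u|\in\dom(\gt_\alpha)$. By the first Beurling--Deny criterion $\E^{-tH_\alpha}$ is positivity preserving for every $\alpha>-1$; this proves the first assertion.

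For the Markovian property when $\alpha=0$ the point is that the coefficients coincide: $\gt_0[u]=\sum_{n\ge0}\big|\sqrt{n+1}\,u_n-\sqrt{n+1}\,u_{n+1}\big|^2=\sum_{n\ge0}(n+1)\,|u_n-u_{n+1}|^2$, i.e. $\gt_0$ is (a weighted version of) a genuine graph Dirichlet form with only the \emph{difference} $u_n-u_{n+1}$ entering. For such forms the normal contraction property is classical: if $T\colon\R\to\R$ is $1$-Lipschitz with $T(0)=0$ then $|T(a)-T(b)|\le|a-b|$, hence $\gt_0[T\circ u]\le\gt_0[u]$ and $T\circ u\in\dom(\gt_0)$ whenever $u\in\dom(\gt_0)$; taking $T(x)=\min(\max(x,0),1)$ (or just $T(x)=\min(x,1)$ after the positivity step) verifies the second Beurling--Deny criterion. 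Therefore $\E^{-tH_0}$ is a Markovian (sub-Markovian and conservative on the form level, i.e. $\ell^\infty$-contractive) semigroup.

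The only genuine subtlety — the ``main obstacle'' such as it is — is that for $\alpha\neq0$ the form is \emph{not} of pure difference type: the substitution $u\mapsto\min(u,\id)$ need not decrease $\sum_n|\sqrt{n+\alpha+1}\,u_n-\sqrt{n+1}\,u_{n+1}|^2$, since the two coefficients differ, so the second Beurling--Deny criterion genuinely fails in general and one should not expect $\E^{-tH_\alpha}$ to be $\ell^\infty$-contractive for $\alpha\neq0$. (Consistently, the introduction announces that it is the similarity-transformed operator $\wt H_\alpha$ acting in a weighted space that is Markovian for all $\alpha>-1$.) Hence I would only claim the Markovian property for $\alpha=0$ and otherwise be careful to invoke the first Beurling--Deny criterion alone. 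One should also note in passing that $\gt_\alpha$ is a \emph{real} form — its coefficients are real — so the Beurling--Deny theory for real $L^2$-spaces applies directly, with the complexification handled in the usual way; I would mention this to justify restricting the verification of the contraction inequalities to real-valued sequences.
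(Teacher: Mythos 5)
Your argument is correct and follows essentially the same route as the paper: the first Beurling--Deny criterion via the pointwise inequality $\bigl|\,p|a|-q|b|\,\bigr|\le|pa-qb|$ applied to the factorized form \eqref{eq:gtafact}, and the second criterion for $\alpha=0$ via the normal-contraction property of the pure-difference form $\sum_n(n+1)|u_n-u_{n+1}|^2$ (the paper leaves this step as ``straightforward to check''; you supply the $1$-Lipschitz argument). The only quibble is your parenthetical ``conservative on the form level'': the second Beurling--Deny criterion yields $\ell^\infty$-contractivity, i.e.\ sub-Markovianity, while conservativity (stochastic completeness) is a separate matter settled later in Lemma \ref{lem:RW}.
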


\begin{proof}
By the first Beurling--Deny criterion (see \cite[Theorem XIII.50]{rsIV}), it suffices to notice that
\begin{align*}
\langle H_\alpha |u|,|u|\rangle_{\ell^2} \le \langle H_\alpha u,u\rangle_{\ell^2}
\end{align*}
in view of \eqref{eq:gtafact}. Here $|u|: = (|u_n|)_{n\ge 0}$. 

If $\alpha=0$, then by Lemma \ref{lem:form}
\begin{align*}
\gt_0[u] = \sum_{n\ge 0} (n+1)\big|  u_n -  u_{n+1}\big|^2
\end{align*}
for all $u\in \dom(\gt_0)$. Suppose additionally that $u\ge 0$, that is, $u_n\ge 0$ for all $n\ge 0$. It is straightforward to check that $\min(u,\id)$ also belongs to $\dom(\gt_0)$ and, moreover,
\begin{align*}
\gt_0[\min(u,\id)] \le \gt_0[u].
\end{align*}
By the second Beurling--Deny criterion (see \cite[Theorem XIII.51]{rsIV}), $\E^{-t H_0}$, $t>0$ extends to a contraction on $\ell^p$ for each $p\in[1,\infty]$. This implies that it is Markovian and the corresponding quadratic form $\gt_0$ is a Dirichlet form \cite[\S 1.4]{fuk10}.
\end{proof}

\begin{remark}\label{rem:DirForm}
For $\alpha\neq 0$ the form $\gt_\alpha$ is not a Dirichlet form. Indeed, by Lemma \ref{lem:form}, for each  $0\le u\in \dom(\gt_\alpha) = \dom(\gt_0)$, we get $\min(u,\id) \in \dom(\gt_0) = \dom(\gt_\alpha)$. However, one can construct a positive $u\in \dom(\gt_\alpha)$ such that $\gt_\alpha[\min(u,\id)] > \gt_\alpha[u]$. Therefore, the semigroup 
$\E^{-t H_\alpha}$, $t>0$ is not Markovian if $\alpha\neq 0$.
\end{remark}

In fact, the form $\gt_\alpha$ is closely connected with the Dirichlet form and this form would be important in our analysis. Consider the weighted space $\ell^2(\Z_{\ge 0};\sigma_\alpha^2)$. The multiplication operator $\cL_\alpha$ given by \eqref{eq:La} defines an isometric isomorphism from $\ell^2(\Z_{\ge 0};\sigma_\alpha^2)$ onto $\ell^2(\Z_{\ge 0})$. Consider the operator $\wt{H}_\alpha$ defined on $\ell^2(\Z_{\ge 0};\sigma_\alpha^2)$ by 
\be\label{eq:wtHalpha}
\wt{H}_\alpha = \cL_\alpha^{-1} {H}_\alpha \cL_\alpha.
\ee  
The corresponding difference expression is given by
\begin{align}\label{eq:StringDiff}
\begin{split}
(\wt\tau_\alpha u)_n & = - n\,u_{n-1} + (2n+1 +\alpha)u_n - (n+1+\alpha)\,u_{n+1} \\
& = \frac{1}{\sigma_\alpha(n)^2} \sum_{k\ge 0\colon |k-n|=1} \frac{u_n - u_k}{\omega_\alpha(\min(n,k))},\quad n\ge 0.
\end{split}
\end{align}
Then it is easy to check that the quadratic form $\wt{\gt}_\alpha$ is simply given by
\begin{align}\label{eq:wtform}
\begin{split}
\wt{\gt}_\alpha[u] =  \langle \wt{H}_\alpha u, u \rangle_{\ell^2(\sigma_\alpha^2)} &= \|\cW_\alpha^{-1/2}(I-\cS^\ast) u\|^2_{\ell^2}\\
& =  \sum_{n\ge 0} \frac{1}{\omega_\alpha(n)}|{u_n} - {u_{n+1}}|^2 \\
&= \sum_{n\ge 0} \frac{(\alpha+1)_{n+1}}{n!}|{u_n} - {u_{n+1}}|^2 
\end{split}
\end{align}
for every $u\in\ell^2_c$. The closure of this form is a regular Dirichlet form in $\ell^2(\Z_{\ge 0};\sigma_\alpha^2)$. 

\begin{corollary}\label{cor:DirForm}
Let $\alpha>-1$ and $\wt{H}_\alpha$ be the operator \eqref{eq:wtHalpha} acting in $\ell^2(\Z_{\ge 0};\sigma_\alpha^2)$. Then $\wt{H}_\alpha$ is Markovian, that is, the corresponding semigroup 
\be\label{eq:semiWT}
\E^{-t\wt{H}_\alpha}= \cL_\alpha^{-1} \E^{-t {H}_\alpha} \cL_\alpha,\qquad t>0,
\ee
 is positivity preserving and $\ell^\infty$ contractive.
\end{corollary}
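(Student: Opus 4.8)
The plan is to deduce Corollary~\ref{cor:DirForm} from the analysis of the form $\wt{\gt}_\alpha$ carried out just above, essentially by verifying the two Beurling--Deny criteria directly in the weighted space $\ell^2(\Z_{\ge 0};\sigma_\alpha^2)$. First I would record that $\wt{H}_\alpha$ is a nonnegative self-adjoint operator in $\ell^2(\Z_{\ge 0};\sigma_\alpha^2)$: this is immediate from \eqref{eq:wtHalpha}, since $\cL_\alpha$ is a unitary map from $\ell^2(\Z_{\ge 0};\sigma_\alpha^2)$ onto $\ell^2(\Z_{\ge 0})$ and $H_\alpha$ is nonnegative self-adjoint on the latter; consequently $\E^{-t\wt{H}_\alpha}$ is a well-defined strongly continuous semigroup of self-adjoint contractions and the identity \eqref{eq:semiWT} holds. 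The associated closed form is $\wt{\gt}_\alpha$ as displayed in \eqref{eq:wtform}, namely
\be\label{eq:wtformagain}
\wt{\gt}_\alpha[u] = \sum_{n\ge 0} \frac{(\alpha+1)_{n+1}}{n!}\,|u_n - u_{n+1}|^2,
\ee
with $\dom(\wt{\gt}_\alpha)$ the set of $u\in\ell^2(\Z_{\ge 0};\sigma_\alpha^2)$ for which this sum is finite (one obtains this description transporting Lemma~\ref{lem:form} through $\cL_\alpha$, but it is not strictly needed).

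Next I would verify the first Beurling--Deny criterion for positivity preservation (\cite[Theorem XIII.50]{rsIV}): for any $u\in\dom(\wt{\gt}_\alpha)$ one has $|u|\in\dom(\wt{\gt}_\alpha)$ and $\wt{\gt}_\alpha[\,|u|\,]\le\wt{\gt}_\alpha[u]$, which follows termwise from the elementary inequality $\big|\,|a|-|b|\,\big|\le|a-b|$ applied to each pair $a=u_n$, $b=u_{n+1}$, since all weights $(\alpha+1)_{n+1}/n!$ are positive. Then for the $\ell^\infty$-contractivity (second Beurling--Deny criterion, \cite[Theorem XIII.51]{rsIV}) I would check that for every real $u\in\dom(\wt{\gt}_\alpha)$ the truncation $v:=\min(u,\id)$ (that is, $v_n=\min(u_n,1)$) lies in $\dom(\wt{\gt}_\alpha)$ with $\wt{\gt}_\alpha[v]\le\wt{\gt}_\alpha[u]$; again this is termwise, because the map $t\mapsto\min(t,1)$ is a $1$-Lipschitz contraction fixing the target structure, so $|v_n-v_{n+1}|\le|u_n-u_{n+1}|$ for every $n$, and the same for truncation from below by $0$ if one prefers to invoke the normal-contraction form of the criterion. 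By \cite[Theorem XIII.51]{rsIV} (see also \cite[\S 1.4]{fuk10}) this yields that $\E^{-t\wt{H}_\alpha}$ extends to a contraction on $\ell^p(\Z_{\ge 0};\sigma_\alpha^2)$ for every $p\in[1,\infty]$; in particular $\wt{H}_\alpha$ is Markovian, $\wt{\gt}_\alpha$ is a Dirichlet form, and it is regular because $\ell^2_c$ is a form core (the truncations and the density of finitely supported sequences give regularity in the sense of \cite{fuk10}).

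Alternatively — and perhaps more cleanly — I would simply invoke the corollary in Section~\ref{sec:III} stating that $\E^{-tH_0}$ is Markovian together with the comparison in Remark~\ref{rem:string}: the point masses and lengths show $\wt{H}_\alpha$ is the generator of the continuous-time random walk on $\Z_{\ge 0}$ with the given conductances $1/\omega_\alpha(n)$, so positivity preservation and $\ell^\infty$-contractivity are built in. But since the excerpt has already packaged everything we need as the form \eqref{eq:wtform}, the Beurling--Deny route above is the shortest rigorous path and is the one I would write out. The only mild subtlety — the ``main obstacle'' if there is one — is making sure the normal contraction acts correctly with respect to the \emph{unweighted} differences $u_n-u_{n+1}$ appearing in \eqref{eq:wtformagain}: the weight $\sigma_\alpha^2$ sits only in the ambient Hilbert space $\ell^2(\Z_{\ge 0};\sigma_\alpha^2)$ and plays no role in the Lipschitz estimate on each summand, which is precisely why the truncation argument goes through for $\wt{H}_\alpha$ for all $\alpha>-1$ even though $\gt_\alpha$ itself fails to be a Dirichlet form for $\alpha\neq 0$ (Remark~\ref{rem:DirForm}); I would state this contrast explicitly so the reader sees no contradiction with \eqref{eq:wtHalpha}.
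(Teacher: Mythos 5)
Your proposal is correct and follows essentially the same route as the paper: the corollary is exactly the assertion that the closure of $\wt{\gt}_\alpha$ in \eqref{eq:wtform} is a (regular) Dirichlet form, which the paper justifies by the same Beurling--Deny verification it carries out for $H_0$, applied now to the positively weighted differences $\frac{(\alpha+1)_{n+1}}{n!}|u_n-u_{n+1}|^2$. Your observation that the normal contractions act termwise on the unweighted differences, so that the argument works for all $\alpha>-1$ in contrast with Remark \ref{rem:DirForm}, is precisely the point.
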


\begin{remark}\label{rem:RW}
The first line in \eqref{eq:StringDiff} shows that $\wt{H}_\alpha$ generates a birth-and-death process on $\Z_{\ge 0}$ (see \cite[Chapter 17.5]{feller}), however, the second line connects $\wt{H}_\alpha$ with a continuous-time random walk (a simple Markov chain) on $\Z_{\ge 0}$ (see \cite{fuk10, klwBook}). The latter is not at all surprising since their connections with the Stieltjes moment problem and Krein--Stieltjes strings is widely known (see, e.g., \cite{kac05}).
\end{remark}

As a by-product of the factorization \eqref{eq:Stifact} we arrive at the following continued fraction representation of the exponential integral \eqref{eq:Ei} and the incomplete gamma function $\Gamma(s,z)$. We do not need this formula for our future purposes, however, it is so beautiful that we decided to include it together with a short proof.

\begin{corollary}\label{cor:EiCF}
Let $\alpha>-1$. Then 
\be\label{eq:EiCF}
E_{1+\alpha}(z) =  z^\alpha\Gamma(-\alpha,z) = \cfrac{ \E^{-z}}{z +  \cfrac{\alpha+1 }{1 + \cfrac{1}{z +  \cfrac{\alpha+2}{1 +\cfrac{2}{z + \cfrac{\alpha+3}{ 1 + \cfrac{3}{\ddots}}}}}}},
\ee
which converges for all $z\in \C\setminus (-\infty,0]$. 
\end{corollary}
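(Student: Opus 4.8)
The plan is to obtain \eqref{eq:EiCF} as a by-product of the string factorization \eqref{eq:Stifact}, by reading off the continued fraction of the Weyl function $m_\alpha$. By \eqref{eq:malpha} one has, for $z\in\C\setminus(-\infty,0]$,
\[
m_\alpha(-z)=\E^{z}E_{1+\alpha}(z)=\int_{0}^{\infty}\frac{\rho_\alpha(d\lambda)}{\lambda+z},
\]
so it suffices to expand $m_\alpha(-z)$ as the continued fraction occurring on the right-hand side of \eqref{eq:EiCF}; multiplying through by $\E^{-z}$ then gives the assertion.

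Since $H_\alpha$ is self-adjoint, the associated Hamburger moment problem is determinate, and therefore $m_\alpha$ equals the limit of its (Jacobi) continued fraction,
\[
m_\alpha(-z)=\cfrac{1}{h^{(\alpha)}_{0,0}+z-\cfrac{(h^{(\alpha)}_{0,1})^{2}}{h^{(\alpha)}_{1,1}+z-\cfrac{(h^{(\alpha)}_{1,2})^{2}}{h^{(\alpha)}_{2,2}+z-\ \cdots}}}.
\]
Substituting the factorization \eqref{eq:Stifact} for the entries $h^{(\alpha)}_{n,n}$ and $h^{(\alpha)}_{n,n+1}$, an application of the elementary contraction formula for continued fractions shows that the right-hand side is precisely the even contraction of the Stieltjes fraction
\[
S_\alpha(z):=\cfrac{1}{l_\alpha(0)\,z+\cfrac{1}{\omega_\alpha(0)+\cfrac{1}{l_\alpha(1)\,z+\cfrac{1}{\omega_\alpha(1)+\ \cdots}}}},
\]
namely the continued fraction attached to the Krein--Stieltjes string of Remark \ref{rem:string}; this matching is the only genuine computation, and it is routine. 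Because $\sum_{n\ge0}l_\alpha(n)=\sum_{n\ge0}\binom{n+\alpha}{n}=\infty$ for every $\alpha>-1$, the classical theorem of Stieltjes on $S$-fractions guarantees that $S_\alpha$ itself converges, locally uniformly on $\C\setminus(-\infty,0]$, to the Stieltjes transform of the unique measure carrying the moments encoded by $S_\alpha$ --- which are the moments of $\rho_\alpha$, precisely because the even contraction of $S_\alpha$ is the continued fraction of $m_\alpha$ just written down. Hence $S_\alpha(z)=m_\alpha(-z)$ on $\C\setminus(-\infty,0]$.

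It remains to bring $S_\alpha$ into the shape displayed in \eqref{eq:EiCF}, which is a routine equivalence transformation of the continued fraction: rescaling the $n$-th partial quotient by $l_\alpha(n)^{-1}$ at the odd levels and by $\omega_\alpha(n)^{-1}$ at the even levels, and using
\[
\frac{1}{l_\alpha(n)\,\omega_\alpha(n)}=n+\alpha+1\quad(n\ge0),\qquad\frac{1}{\omega_\alpha(n-1)\,l_\alpha(n)}=n\quad(n\ge1),
\]
turns every partial denominator into $z$ or $1$ and the partial numerators into $1,\alpha+1,1,\alpha+2,2,\alpha+3,3,\dots$, which is exactly the continued fraction in \eqref{eq:EiCF}. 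Combined with $m_\alpha(-z)=\E^{z}E_{1+\alpha}(z)$, this proves the corollary together with the stated convergence in $\C\setminus(-\infty,0]$.

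The only step that is not purely formal is the passage, for the infinite matrix $H_\alpha$, between convergence of the continued fraction of $m_\alpha$ and convergence of the associated Stieltjes fraction $S_\alpha$; this is classical moment-problem theory (Hamburger, Stieltjes), with determinacy supplied by the self-adjointness of $H_\alpha$ established above. If one prefers not to invoke it, the same conclusion follows from a finite-section argument: for the $N\times N$ truncation $H^{[N]}_\alpha$ the corresponding $N$-mass string yields a manifestly \emph{finite} Stieltjes fraction for $\langle(H^{[N]}_\alpha-z)^{-1}\delta_0,\delta_0\rangle$, and one then passes to the limit $N\to\infty$ using strong resolvent convergence of the truncations.
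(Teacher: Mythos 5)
Your proposal is correct and follows essentially the same route as the paper: the string factorization \eqref{eq:Stifact} yields the Stieltjes continued fraction for the Weyl function $m_\alpha$ (with convergence on $\C\setminus\R_{\ge0}$ coming from self-adjointness/determinacy), and an equivalence transformation using $\tfrac{1}{l_\alpha(n)\omega_\alpha(n)}=n+\alpha+1$ and $\tfrac{1}{\omega_\alpha(n-1)l_\alpha(n)}=n$ produces \eqref{eq:EiCF}. You merely spell out the contraction/convergence details (Stieltjes's divergence criterion via $\sum_n l_\alpha(n)=\infty$) that the paper delegates to the references \cite{kakr74,sti,ek18}.
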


\begin{proof}
The string factorization \eqref{eq:Stifact} implies the following Stieltjes continued fraction representation of the Weyl function $m_\alpha$ (see, e.g., \cite[\S 13]{kakr74}, \cite{sti}, \cite[\S 3]{ek18}):
\be
m_\alpha(z) = \cfrac{1}{- z\, l_{\alpha}(0) +  \cfrac{1}{\omega_{\alpha}(0) + \cfrac{1}{- z\,l_{\alpha}(1) +  \cfrac{1}{\omega_{\alpha}(1) +\cfrac{1}{\ddots}}}}}\, ,
\ee
which converges locally uniformly in $\C\setminus\R_{\ge 0}$ (this follows from the self-adjointness of $H_\alpha$, see, e.g., \cite{akh}, \cite[\S 3]{ek18}). Taking into account  \eqref{eq:la} and \eqref{eq:wa} and noting that
\begin{align*}
(\alpha+n)\frac{\sigma_{\alpha}(n-1)^2}{\sigma_{\alpha}(n)^2} = (\alpha+n) \frac{(\alpha+1)_{n-1}}{(n-1)!} \frac{n!}{(\alpha+1)_n} = n
\end{align*}
for all $n>0$, we arrive at \eqref{eq:EiCF}.
\end{proof}

\begin{remark}
The continued fraction expansion \eqref{eq:EiCF} of the exponential integral is by no means new and the case $\alpha=0$ can already be found in the work of Stieltjes \cite[Chapter IX]{sti} (see also \dlmf{8.9.2}, \dlmf{8.19.17}, \cite[\S 6.7.1]{gst} and \cite[(14.1.6)]{cuyt}).
\end{remark}

\section{The heat semigroup}\label{sec:IV}
In this section we look at the one-dimensional discrete heat equation
\begin{align} \label{eq:heat}
   \dot \psi(t,n) & = - H_\alpha \psi(t,n), & (t,x) & \in\R_{>0}\times \Z_{\ge0},
\end{align}
associated with the Laguerre operator $H_\alpha$, as well as at the closely related heat equation
\begin{align} \label{eq:heatWT}
   \dot \psi(t,n) & = - \wt{H}_\alpha \psi(t,n), & (t,x) & \in\R_{>0}\times \Z_{\ge0},
\end{align}
associated with the operator $\wt{H}_\alpha$ defined in the previous section. We set
 \be\label{eq:heatMatrix}
 \E^{- tH_\alpha}(n,m) := \langle \E^{- tH_\alpha}\delta_n, \delta_m\rangle_{\ell^2},\qquad (n,m)\in\Z_{\ge 0}\times\Z_{\ge 0},
 \ee
and 
 \be\label{eq:heatWTMatrix}
 \E^{- t\wt{H}_\alpha}(n,m) := \langle \E^{- t\wt{H}_\alpha}\delta_n, \delta_m\rangle_{\ell^2(\sigma_\alpha^2)},\qquad (n,m)\in\Z_{\ge 0}\times\Z_{\ge 0}.
 \ee
 Notice that \eqref{eq:heatWTMatrix} does not coincide with the matrix representation of $ \E^{- t\wt{H}_\alpha}$ in an orthonormal basis (if $\alpha\neq 0$) and we defined it this way in order to write the heat kernel of $\wt{H}_\alpha$ in the form familiar in the continuous context, that is, in the form 
\be\label{eq:heatWTaction}
(\E^{-t\wt{H}_\alpha} u)_n = \sum_{m\ge 0}   \E^{- t\wt{H}_\alpha}(n,m) u_m \sigma_\alpha(m)^2.
\ee

\subsection{Connection with Jacobi polynomials}

We begin by establishing a connection between the discrete Laguerre operators and Jacobi polynomials, which follows from the fact that the Laplace transform of a product of two Laguerre polynomials is expressed by means of a terminating hypergeometric series.

\begin{theorem}\label{thm:explicit}
Let $\alpha>-1$. The kernel of the heat semigroup $\E^{- tH_\alpha}$ is given by 
\begin{multline}\label{eq:explicit}
\E^{- tH_\alpha}(n,m)=\E^{- tH_\alpha}(m,n)\\
= \frac {1}{(1+ t)^{1+\alpha}}\left(\frac{t-1}{t+1}\right)^n\left(\frac{t}{t+1}\right)^{m-n}\,
\frac{\sigma_\alpha(m)}{\sigma_\alpha(n)}\ 
P_n^{(\alpha,m-n)}\left( \frac{t^2+1}{t^2-1}\right)
\end{multline} 
for all $n$, $m\in\Z_{\ge0}$.
\end{theorem}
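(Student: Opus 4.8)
The plan is to reduce the computation to a single Laplace-type integral via the spectral representation of $H_\alpha$, evaluate that integral by a double generating-function argument, and then recognise the resulting terminating hypergeometric series as a Jacobi polynomial through a standard reversal transformation. First I would use \eqref{eq:Halpha=M}, \eqref{eq:Falpha} and the orthogonality relation \eqref{eq:lagorth}: since $\E^{-tH_\alpha}=\cF_\alpha^{-1}\E^{-tM_\alpha}\cF_\alpha$ and $\cF_\alpha\delta_n=P_{\alpha,n}$, one gets, using also \eqref{eq:P=L},
\[
\E^{-tH_\alpha}(n,m)=\frac{1}{\Gamma(\alpha+1)}\int_0^\infty \E^{-t\lambda}P_{\alpha,n}(\lambda)P_{\alpha,m}(\lambda)\,\E^{-\lambda}\lambda^\alpha\,d\lambda
=\frac{1}{\Gamma(\alpha+1)\,\sigma_\alpha(n)\,\sigma_\alpha(m)}\int_0^\infty \E^{-(1+t)\lambda}L_n^{(\alpha)}(\lambda)L_m^{(\alpha)}(\lambda)\,\lambda^\alpha\,d\lambda .
\]
Symmetry in $n,m$ is now evident, so it suffices to establish \eqref{eq:explicit} for $m\ge n$ and then invoke $\E^{-tH_\alpha}(n,m)=\E^{-tH_\alpha}(m,n)$; the task has been reduced to computing the Laplace transform of the product $L_n^{(\alpha)}L_m^{(\alpha)}$ against the weight $\lambda^\alpha$.

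To evaluate this I would multiply by $w^n v^m$ and sum over $n,m\ge0$. For $t>0$ and $|w|,|v|<1$ the bound $|L_k^{(\alpha)}(\lambda)|\le C_\alpha(k+1)^{|\alpha|}\E^{\lambda/2}$ on $[0,\infty)$ (see \dlmf{18.14.8} and its $-1<\alpha<0$ analogue) makes every interchange of sum and integral legitimate, and with the Laguerre generating function $\sum_{k\ge0}L_k^{(\alpha)}(\lambda)w^k=(1-w)^{-\alpha-1}\exp\!\big(-\lambda w/(1-w)\big)$ together with $\int_0^\infty \E^{-A\lambda}\lambda^\alpha\,d\lambda=\Gamma(\alpha+1)A^{-\alpha-1}$ (with $A=(1+t)+\frac{w}{1-w}+\frac{v}{1-v}$) one obtains, after simplifying $(1-w)(1-v)A$,
\[
\sum_{n,m\ge0}\E^{-tH_\alpha}(n,m)\,\sigma_\alpha(n)\,\sigma_\alpha(m)\,w^n v^m=\big[(1+t)-tw-tv+(t-1)wv\big]^{-\alpha-1}.
\]

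It remains to extract the coefficient of $w^n v^m$. Treating the bracket as linear in $v$ and using the binomial series gives the $v^m$-coefficient $\binom{m+\alpha}{m}\big(t-(t-1)w\big)^m\big((1+t)-tw\big)^{-m-\alpha-1}$; expanding both factors in $w$ produces a finite sum which, rewritten through Pochhammer symbols, equals $\frac{(\alpha+m+1)_n}{n!}\,\hyp21{-n,-m}{-\alpha-m-n}{\frac{t^2-1}{t^2}}$ up to explicit powers of $t$ and $1+t$. Applying the reversal transformation $\hyp21{-n,b}{c}{z}=\frac{(b)_n}{(c)_n}(-z)^n\hyp21{-n,1-c-n}{1-b-n}{1/z}$ with $b=-m$, $c=-\alpha-m-n$ turns the series into $\hyp21{-n,\,n+\alpha+\beta+1}{\beta+1}{\frac{t^2}{t^2-1}}$, $\beta:=m-n\ge0$, which by the hypergeometric representation of the Jacobi polynomials is $\frac{(-1)^n n!}{(\beta+1)_n}P_n^{(\alpha,\beta)}\!\big(\frac{t^2+1}{t^2-1}\big)$. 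Collecting the powers of $t$, $t-1$ and $t+1$ reproduces exactly the prefactor of \eqref{eq:explicit}, the Pochhammer ratio $\frac{(\alpha+m+1)_n(-m)_n}{(-\alpha-m-n)_n(\beta+1)_n}$ collapses to $1$, so that the surviving constant is $\binom{m+\alpha}{m}=\sigma_\alpha(m)^2$; dividing by $\sigma_\alpha(n)\sigma_\alpha(m)$ yields \eqref{eq:explicit}, and continuity in $t$ covers the removable point $t=1$.

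The analytic inputs (the spectral theorem, the $\Gamma$-integral, Tonelli's theorem) are routine; the one place where care is genuinely needed — and the main obstacle — is the hypergeometric bookkeeping: choosing the transformation that lands the terminating ${}_2F_1$ precisely on $P_n^{(\alpha,m-n)}$ evaluated at $\frac{t^2+1}{t^2-1}$, and verifying that every Gamma/Pochhammer prefactor telescopes into the compact factor $\sigma_\alpha(m)/\sigma_\alpha(n)$. A shortcut is to quote the classical closed form for $\int_0^\infty\E^{-s\lambda}\lambda^\alpha L_n^{(\alpha)}(\lambda)L_m^{(\alpha)}(\lambda)\,d\lambda$ from the integral tables in place of the generating-function step, but the identification with Jacobi polynomials and the same constant-matching would still have to be carried out.
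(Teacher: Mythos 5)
Your proposal is correct, and it verifies: the double generating function does collapse to $\big[(1+t)-tw-tv+(t-1)wv\big]^{-\alpha-1}$, the coefficient of $w^nv^m$ is $\binom{m+\alpha}{m}\frac{t^{n+m}}{(1+t)^{n+m+\alpha+1}}\frac{(\alpha+m+1)_n}{n!}\hyp21{-n,-m}{-\alpha-m-n}{\frac{t^2-1}{t^2}}$, and the reversal transformation together with the symmetry $P_n^{(\alpha,\beta)}(-x)=(-1)^nP_n^{(\beta,\alpha)}(x)$ lands exactly on \eqref{eq:explicit} with all Pochhammer factors cancelling as you claim. The paper takes the same first step (spectral representation reducing the kernel to $\frac{1}{\Gamma(\alpha+1)\sigma_\alpha(n)\sigma_\alpha(m)}\int_0^\infty \E^{-(1+t)\lambda}L_n^{(\alpha)}L_m^{(\alpha)}\lambda^\alpha\,d\lambda$), but then simply quotes the tabulated Laplace transform \cite[(4.11.35)]{erd} to obtain \eqref{K1}, i.e.\ the form $\frac{t^{n+m}}{(1+t)^{n+m+\alpha+1}}\hyp21{-n,-m}{\alpha+1}{1/t^2}$, and converts it to a Jacobi polynomial by a single Euler transformation \dlmf{15.8.1} plus \eqref{eq:01}. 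Your route replaces the table lookup by a self-contained generating-function derivation and uses a series reversal instead of Euler's transformation; you end up at a different but equivalent terminating $_2F_1$ (lower parameter $-\alpha-m-n$ rather than $\alpha+1$). What your version buys is independence from the integral tables at the cost of heavier Pochhammer bookkeeping; what the paper's version buys is brevity. One small point worth making explicit if you write this up: your identification of $\hyp21{-n,\,n+\alpha+\beta+1}{\beta+1}{\cdot}$ with $P_n^{(\alpha,\beta)}$ uses the parameter symmetry \eqref{eq:symmetry}, since the hypergeometric representation \eqref{eq:01} has lower parameter $\alpha+1$, not $\beta+1$; this is where the sign $(-1)^n$ and the evaluation point $\frac{t^2+1}{t^2-1}$ (rather than its negative) come from.
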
 

\begin{proof}
Taking into account \eqref{eq:Halpha=M}, \eqref{eq:Falpha} and then \eqref{eq:P=L}, we get
\begin{align*}
\langle \E^{-tH_\alpha} \delta_n,\delta_m\rangle_{\ell^2} &= \langle \cF^{-1}_\alpha \E^{- tM_\alpha} \cF_\alpha \delta_n,\delta_m\rangle_{\ell^2} \\ &= \langle  \E^{- tM_\alpha} \cF_\alpha \delta_n,\cF_\alpha\delta_m\rangle_{L^2(\rho_\alpha)} \\
& = \frac{1}{\sigma_\alpha(n)\sigma_\alpha(m)\Gamma(\alpha+1)}\int_0^\infty \E^{-(1+ t)\lambda} L_n^{(\alpha)}(\lambda)L_m^{(\alpha)}(\lambda) \lambda^\alpha d\lambda,
\end{align*}
for $n,m\in\Z_{\ge0}$. Thus, every element of the kernel of the operator $\E^{- tH_\alpha}$ is the Laplace transform of a product of two Laguerre polynomials and hence we get (see \cite[(4.11.35)]{erd} and \dlmf{15.8.7}): 
\be\label{K1}
\frac{\E^{- tH_\alpha}(n,m)}{\sigma_\alpha(n)\sigma_\alpha(m)}
	 = \frac{ t^{n+m}}{(1+t)^{n+m+\alpha+1}}\,  \hyp21{-n,-m}{\alpha+1}{\frac{1}{t^2}},
\ee
By Euler's transformation \dlmf{15.8.1},
\begin{align*}
\hyp21{-n,-m}{\alpha+1}{\frac{1}{t^2}} = \left(\frac{t^2-1}{t^2}\right)^{n}\hyp21{-n,\alpha+m+1}{\alpha+1}{\frac{1}{1-t^2}}.
\end{align*}
Hence by \eqref{eq:normal} and \eqref{eq:01},
 \eqref{K1} implies \eqref{eq:explicit}
\end{proof}

\begin{remark}
Formula \eqref{eq:explicit} can be derived from \cite[Theorem 3.1]{kkt18} by analytic continuation. Namely, in \cite{kkt18}, it was shown that the kernel of the unitary evolution $\E^{-\I tH_\alpha}$ is given by  
\begin{multline}\label{eq:explicitSchr}
\E^{-\I tH_\alpha}(n,m)=\E^{-\I tH_\alpha}(m,n)\\
= \frac {1}{(1+\I t)^{1+\alpha}}\left(\frac{t+\I}{t-\I}\right)^n\left(\frac{t}{t-\I}\right)^{m-n}\,
\frac{\sigma_\alpha(m)}{\sigma_\alpha(n)}\
P_n^{(\alpha,m-n)}\left( \frac{t^2-1}{t^2+1}\right)
\end{multline} 
for all $n$, $m\in\Z_{\ge 0}$. Replacing $\I t$ by $t$ in \eqref{eq:explicitSchr}, we end up with \eqref{eq:explicit}.
\end{remark}

We collect some special cases explicitly for later use.

\begin{corollary}\label{cor:cases}
 \begin{enumerate}[label=(\roman*), ref=(\roman*), leftmargin=*, widest=iii] 
\item In the case $n=0$ we have 
\be\label{eq:erd1}
\E^{- tH_\alpha}(0,m)  = 
\frac { \sigma_\alpha(m)}{(1+ t)^{1+\alpha}}\left(\frac{t}{t+1}\right)^m,\quad m\in\Z_{\ge0}.
\ee
\item In the case $n=1$ we have for $m\in\Z_{\ge 1}$
\be\label{eq:erd2}
\E^{- tH_\alpha}(1,m)=
\frac {1}{(1+ t)^{1+\alpha}}\left(\frac{ t}{t+1}\right)^{m-1}
\frac{(1+\alpha)t^2+ m}{(t+1)^2}\,\frac{\sigma_\alpha(m)}{\sigma_\alpha(1)}.
\ee
\item
In the case $n=m$ we have
\be\label{eq:erd3}
\E^{- tH_\alpha}(m,m) = \frac {1}{(1+ t)^{1+\alpha}}\left(\frac{t-1}{t+1}\right)^m P_m^{(\alpha,0)}\left( \frac{t^2+1}{t^2-1}\right), \quad m\in\Z_{\ge0}.
\ee
\end{enumerate}
\end{corollary}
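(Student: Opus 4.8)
The plan is simply to specialize the closed form \eqref{eq:explicit} of Theorem \ref{thm:explicit} at the three values $n=0$, $n=1$ and $n=m$, using only the explicit shape of the Jacobi polynomials of degree $0$ and $1$.

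Parts (i) and (iii) require essentially no computation. For (iii), setting $n=m$ in \eqref{eq:explicit} makes $\left(\tfrac{t}{t+1}\right)^{m-n}=1$ and $\sigma_\alpha(m)/\sigma_\alpha(n)=1$, while $P_n^{(\alpha,m-n)}$ becomes $P_m^{(\alpha,0)}$; this is precisely \eqref{eq:erd3}. For (i), one uses $\sigma_\alpha(0)=1$ together with $P_0^{(\alpha,\beta)}\equiv1$ for every $\beta$, so that \eqref{eq:explicit} with $n=0$ collapses at once to \eqref{eq:erd1}. (As a cross-check, \eqref{eq:erd1} is also visible directly in the intermediate identity \eqref{K1} with $n=0$, since $\hyp21{0,-m}{\alpha+1}{1/t^2}=1$.)

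The only case requiring a short calculation is (ii). Here I would use the standard value $P_1^{(\alpha,\beta)}(x)=\tfrac12\bigl((\alpha-\beta)+(\alpha+\beta+2)x\bigr)$ with $\beta=m-1$ and $x=\tfrac{t^2+1}{t^2-1}$; putting the two terms over the common denominator $t^2-1$, the coefficient of $t^2$ in the numerator is $(\alpha-m+1)+(\alpha+m+1)=2(\alpha+1)$ and the constant term is $-(\alpha-m+1)+(\alpha+m+1)=2m$, whence
\[
P_1^{(\alpha,m-1)}\!\left(\tfrac{t^2+1}{t^2-1}\right)=\frac{(\alpha+1)t^2+m}{t^2-1}.
\]
Inserting this into \eqref{eq:explicit} at $n=1$ and simplifying $\tfrac{t-1}{t+1}\cdot\tfrac{1}{t^2-1}=\tfrac{1}{(t+1)^2}$ gives \eqref{eq:erd2}. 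There is no genuine obstacle in any of this; the only points to keep in mind are the normalization $\sigma_\alpha(0)=1$ in (i) and the two cancellations in the numerator in (ii).
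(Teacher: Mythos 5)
Your proposal is correct and follows exactly the paper's route: the paper likewise just specializes \eqref{eq:explicit} using $P_0^{(\alpha,m)}(z)=1$ and $P_1^{(\alpha,m-1)}(z)=-m+(m+1+\alpha)\frac{z+1}{2}$, which is the same degree-one Jacobi polynomial you use written in a slightly different form. Your evaluation of $P_1^{(\alpha,m-1)}$ at $\frac{t^2+1}{t^2-1}$ and the cancellation $\frac{t-1}{t+1}\cdot\frac{1}{t^2-1}=\frac{1}{(t+1)^2}$ are both accurate, so nothing is missing.
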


\begin{proof}
Just observe that
\begin{equation*}
P_0^{(\alpha,m)}(z) = 1, \quad P_1^{(\alpha,m-1)}(z) = -m+(m+1+\alpha)\frac{z+1}{2}.\qedhere
\end{equation*}
\end{proof}

Taking into account \eqref{eq:semiWT}, one also easily derives the explicit expression for the heat kernel of \eqref{eq:heatWT}. Since $H_\alpha$ and $\wt{H}_\alpha$ are unitarily equivalent, the matrix of $\E^{-t\wt{H}_\alpha}$ in the orthonormal basis $(\cL_\alpha^{-1}\delta_n)_{n\ge 0}$ coincides with that of $\E^{-t H_\alpha}$. However, our definition of \eqref{eq:heatWTMatrix} is slightly different and in fact \eqref{eq:heatMatrix}--\eqref{eq:heatWTMatrix} gives
\be
\E^{- t\wt{H}_\alpha}(n,m) = \frac{\E^{- t H_\alpha}(n,m)}{\sigma_\alpha(n)\sigma_\alpha(m)},\qquad (n,m)\in\Z_{\ge 0}\times \Z_{\ge 0}.
\ee 

\begin{remark}\label{rem:heat=Meixner}
The heat kernel can be expressed in terms of Meixner polynomials \dlmf{18.20.7}:
\be\label{def:Meixner}
M_n(x;\beta,c) := \hyp21{-n,-x}{\beta}{1-\frac{1}{c}}.
\ee
Thus \eqref{K1} reads
\begin{align}\label{eq:heat=Meixner}
\E^{- t\wt{H}_\alpha}(n,m) = \frac{\E^{- t H_\alpha}(n,m)}{\sigma_\alpha(n)\sigma_\alpha(m)} 
= \frac{1}{(1+t)^{\alpha+1}}\Big(\frac{t}{t+1}\Big)^{n+m} M_n\Big(m;1+\alpha,\frac{t^2}{t^2-1}\Big).
\end{align}
\end{remark}

\subsection{Heat semigroup estimates}

Our next aim is to obtain uniform estimates on the elements of the heat kernel.
First observe the following simple bounds.

\begin{lemma}\label{lem:diagest01}
Let $\alpha>-1$ and $n,m \ge 0$. Then 
\be
(1+t)^{1+\alpha}\E^{- tH_\alpha}(n,m) = \sigma_\alpha(n)\sigma_\alpha(m) +\OO(t^{-1}) 
\ee
as $t\to \infty$, and 
\be
\E^{- tH_\alpha}(n,m) = \binom{\max(n,m)}{\min(n,m)}\frac{\sigma_\alpha(\max(n,m))}{\sigma_\alpha(\min(n,m))}t^{|n-m|}(1+\OO(t)) 
\ee
as $t\to +0$.
\end{lemma}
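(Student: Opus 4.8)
The plan is to read off both asymptotic expansions directly from the closed formula \eqref{eq:explicit} (equivalently, from the hypergeometric form \eqref{K1}), since the dependence on $t$ there is completely explicit and the Jacobi/hypergeometric factor is a polynomial in an explicit variable. Throughout, by self-adjointness we have $\E^{-tH_\alpha}(n,m)=\E^{-tH_\alpha}(m,n)$, so we may and do assume $n\le m$ and write $k:=m-n\ge 0$.

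\textbf{Large-$t$ asymptotics.} Here the cleanest route is via \eqref{K1}:
\[
\frac{\E^{- tH_\alpha}(n,m)}{\sigma_\alpha(n)\sigma_\alpha(m)}
= \frac{ t^{n+m}}{(1+t)^{n+m+\alpha+1}}\,\hyp21{-n,-m}{\alpha+1}{\frac{1}{t^2}}.
\]
As $t\to\infty$ the hypergeometric sum is a terminating series in $1/t^2$, so $\hyp21{-n,-m}{\alpha+1}{t^{-2}} = 1 + \OO(t^{-2})$. Meanwhile $t^{n+m}/(1+t)^{n+m+\alpha+1} = (1+t)^{-\alpha-1}(1+t^{-1})^{-(n+m)} = (1+t)^{-\alpha-1}\bigl(1 + \OO(t^{-1})\bigr)$. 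Multiplying, $(1+t)^{1+\alpha}\E^{-tH_\alpha}(n,m)/(\sigma_\alpha(n)\sigma_\alpha(m)) = 1 + \OO(t^{-1})$, which is the first claim. (One should note the error is governed by the $(1+t^{-1})^{-(n+m)}$ factor, hence genuinely $\OO(t^{-1})$ unless $n=m=0$.)

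\textbf{Small-$t$ asymptotics.} Again use \eqref{K1}. As $t\to +0$, $(1+t)^{-(n+m+\alpha+1)} = 1 + \OO(t)$ and $\hyp21{-n,-m}{\alpha+1}{t^{-2}}$ is awkward because the argument blows up; so instead pass to \eqref{eq:explicit} directly, or use Euler's transformation as in the proof of Theorem \ref{thm:explicit} to rewrite the hypergeometric factor as $\hyp21{-n,\alpha+m+1}{\alpha+1}{(1-t^2)^{-1}}$ times $((t^2-1)/t^2)^n$; combined with the $t^{n+m}$ prefactor this produces $t^{m-n}(t^2-1)^n(1+t)^{-(n+m+\alpha+1)}\hyp21{-n,\alpha+m+1}{\alpha+1}{(1-t^2)^{-1}}$ for the normalized kernel. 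At $t=0$ the surviving factor is $(-1)^n\hyp21{-n,\alpha+m+1}{\alpha+1}{1}$, which by the Chu--Vandermonde identity \dlmf{15.4.24} equals $(-1)^n\frac{(-m)_n}{(\alpha+1)_n} = \frac{m!}{(m-n)!\,(\alpha+1)_n}$. Using $(\alpha+1)_n = \binom{n+\alpha}{n}n! = \sigma_\alpha(n)^2\, n!$ and $m!/(m-n)! = \binom{m}{n} n!$, this is $\binom{m}{n}/\sigma_\alpha(n)^2$, and after restoring the factor $\sigma_\alpha(n)\sigma_\alpha(m)$ we get the leading coefficient $\binom{m}{n}\sigma_\alpha(m)/\sigma_\alpha(n)$, i.e. the stated $\binom{\max(n,m)}{\min(n,m)}\sigma_\alpha(\max)/\sigma_\alpha(\min)$, with a relative error $1+\OO(t)$ coming from $(1+t)^{-(n+m+\alpha+1)}$, $(t^2-1)^n = (-1)^n(1+\OO(t^2))$, and the analyticity of the hypergeometric polynomial in $t^2$ near $t=0$.

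\textbf{Main obstacle.} There is no serious analytic difficulty here — everything is an expansion of an explicit elementary-times-polynomial expression. The only point requiring a little care is bookkeeping the combinatorial constants in the $t\to+0$ limit: getting the $t^{|n-m|}$ power, the Chu--Vandermonde evaluation of the hypergeometric factor at argument $1$, and the conversion between Pochhammer symbols, binomials and $\sigma_\alpha$ so that the answer comes out symmetric in $n,m$ in the stated form. Alternatively, for $t\to+0$ one can bypass all hypergeometric identities by noting $\E^{-tH_\alpha} = \id - tH_\alpha + \OO(t^2)$ in operator norm, whence $\E^{-tH_\alpha}(n,m) = \delta_{n,m} - t\,h^{(\alpha)}_{n,m} + \OO(t^2)$ settles the cases $|n-m|\le 1$ immediately, and for $|n-m|=k\ge 2$ one expands $\E^{-tH_\alpha} = \sum_j \frac{(-t)^j}{j!}H_\alpha^j$ and observes that $H_\alpha^j(n,m)=0$ for $j<k$ (tridiagonality) while $H_\alpha^k(n,m) = (-1)^k\prod_{i=0}^{k-1} h^{(\alpha)}_{n+i,n+i+1}$; a short telescoping of the square roots in \eqref{eq:H0} then recovers $\binom{m}{n}\sigma_\alpha(m)/\sigma_\alpha(n)$, giving an independent check.
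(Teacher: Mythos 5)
Your proof is correct and follows essentially the same route as the paper: the paper reads both limits directly off the explicit kernel \eqref{eq:explicit} using the normalization values $P_n^{(\alpha,\beta)}(1)=\binom{n+\alpha}{n}$ and $P_n^{(\alpha,\beta)}(-1)=(-1)^n\binom{n+\beta}{n}$ (i.e.\ \eqref{eq:normal} and \eqref{eq:normal_b}), and your Euler/Chu--Vandermonde evaluation of $\hyp21{-n,\alpha+m+1}{\alpha+1}{1}$ is precisely a re-derivation of \eqref{eq:normal_b} in hypergeometric form. The only blemish is a sign slip in your optional aside: since the off-diagonal entries $h^{(\alpha)}_{n+i,n+i+1}$ are already negative, one has $H_\alpha^k(n,m)=\prod_{i=0}^{k-1}h^{(\alpha)}_{n+i,n+i+1}$ \emph{without} the extra factor $(-1)^k$, and it is this product's sign $(-1)^k$ that cancels against $(-t)^k$ to yield the positive leading coefficient.
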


\begin{proof}
Immediately follows from \eqref{eq:normal} and \eqref{eq:normal_b} (see also \eqref{K1}).
\end{proof}

The latter indicates that one can hope for the following uniform estimate 
\be
 \sup_{n,m\in\Z_{\ge 0}} \frac{|\E^{-t H_\alpha}(n,m)|}{\sigma_\alpha(n)\sigma_\alpha(m)} \le \frac{C}{(1+t)^{1+\alpha}}
\ee
for all positive $t>0$, where $C=C(\alpha)>0$ may depend on $\alpha$. 
The next statement confirms the desired bound for $\alpha\ge 0$. 

\begin{theorem}\label{thm:HeatEst}
If  $\alpha\ge 0$, then
\be\label{eq:HeatEst1}
\|\E^{-t H_\alpha}\|_{\ell^1(\sigma_\alpha)\to \ell^\infty(\sigma_\alpha^{-1})} = \frac{1}{(1+t)^{1+\alpha}},\quad t>0.
\ee
If $\alpha\in(-1,0)$, then
\be\label{eq:HeatEst2}
\|\E^{-t H_\alpha}\|_{\ell^1(\sigma_\alpha)\to \ell^\infty(\sigma_\alpha^{-1})} \ge \frac{1}{(1+t)^{1+\alpha}},\quad t>0.
\ee
\end{theorem}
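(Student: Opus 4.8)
The plan is to translate the operator-norm bound into a pointwise statement about the heat kernel and then exploit the explicit formula \eqref{eq:explicit} together with known properties of Jacobi polynomials. First I would observe that, since $\E^{-tH_\alpha}$ is a positive integral operator in the weighted sense (positivity preserving by the Corollary in Section \ref{sec:III}), its $\ell^1(\sigma_\alpha)\to\ell^\infty(\sigma_\alpha^{-1})$ norm is exactly the supremum of $|\E^{-tH_\alpha}(n,m)|/(\sigma_\alpha(n)\sigma_\alpha(m))$ over $n,m\ge 0$; indeed for such weighted spaces the norm of an operator with kernel $K(n,m)$ (acting as $u\mapsto \sum_m K(n,m)u_m\sigma_\alpha(m)$ after the standard identification) equals $\sup_{n,m}|K(n,m)|/(\sigma_\alpha(n)\sigma_\alpha(m))$, and this sup is attained along the diagonal for a positive semigroup since $|\E^{-tH_\alpha}(n,m)|^2\le \E^{-tH_\alpha}(n,n)\,\E^{-tH_\alpha}(m,m)$ by Cauchy--Schwarz. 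So both inequalities reduce to estimating the diagonal kernel $\E^{-tH_\alpha}(m,m)$, which by Corollary \ref{cor:cases}\ref{eq:erd3} equals
\[
\E^{- tH_\alpha}(m,m) = \frac {1}{(1+ t)^{1+\alpha}}\left(\frac{t-1}{t+1}\right)^m P_m^{(\alpha,0)}\!\left( \frac{t^2+1}{t^2-1}\right).
\]

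For the equality \eqref{eq:HeatEst1} when $\alpha\ge 0$, I would proceed in two steps. The lower bound is immediate: taking $m=0$ gives $\E^{-tH_\alpha}(0,0)=(1+t)^{-(1+\alpha)}$, so the norm is at least $(1+t)^{-(1+\alpha)}$ (and this will also give \eqref{eq:HeatEst2}, which requires nothing more than $\alpha>-1$). For the matching upper bound one must show that for every $m\ge 0$ and every $t>0$,
\[
\left|\left(\frac{t-1}{t+1}\right)^m P_m^{(\alpha,0)}\!\left(\frac{t^2+1}{t^2-1}\right)\right|\le 1 .
\]
Writing $x=(t^2+1)/(t^2-1)$ — which for $t>0$, $t\ne 1$, satisfies $|x|\ge 1$, i.e. $x$ lies outside the orthogonality interval $(-1,1)$ of $P_m^{(\alpha,0)}$ — and noting $(t-1)/(t+1)$ is the reciprocal of a branch of $x+\sqrt{x^2-1}$, this is a weighted bound on a Jacobi polynomial off the orthogonality interval. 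The cleanest route is via the generating-function identity \eqref{K1}: the quantity $(1+t)^{1+\alpha}\E^{-tH_\alpha}(m,m)/\binom{m+\alpha}{m}$ equals $\bigl(t/(1+t)\bigr)^{2m}\hyp21{-m,-m}{\alpha+1}{t^{-2}}$, and since $\hyp21{-m,-m}{\alpha+1}{\cdot}$ has all coefficients $(-m)_k^2/((\alpha+1)_k k!)\ge 0$ for $\alpha\ge 0$ and $k\le m$, one can bound it by its value as $t\to\infty$... but that is not directly what is needed. The main obstacle is precisely this: proving $|\E^{-tH_\alpha}(m,m)|\le (1+t)^{-(1+\alpha)}$ uniformly in $m$, i.e. the sharp off-interval Jacobi bound.

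To overcome it I would use the probabilistic/Markovian structure rather than brute-force polynomial estimates. By Corollary \ref{cor:DirForm}, for $\alpha\ge 0$ the operator $\wt H_\alpha$ is Markovian, so $\E^{-t\wt H_\alpha}$ is $\ell^\infty$-contractive and positivity preserving; hence $\E^{-t\wt H_\alpha}(n,m)\ge 0$ and, testing $\ell^\infty$-contractivity on the constant sequence $u\equiv 1$ and using stochastic completeness (which holds for all $\alpha>-1$, as noted in the introduction via the Meixner generating function), one gets $\sum_{m\ge 0}\E^{-t\wt H_\alpha}(n,m)\sigma_\alpha(m)^2 = 1$ for every $n$. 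In particular each individual term is $\le 1$: $\E^{-t\wt H_\alpha}(n,n)\,\sigma_\alpha(n)^2\le 1$, which is not quite the claim. The right test function is instead: apply $\E^{-t\wt H_\alpha}$ to $\delta_0$ and pair; more efficiently, use that $\|\E^{-t\wt H_\alpha}\|_{\ell^1(\sigma_\alpha^2)\to\ell^\infty}\le 1$ would follow if one already knew it, so the genuine content is the on-diagonal decay. I would therefore combine: (a) $\E^{-t\wt H_\alpha}(n,m)\ge 0$; (b) the semigroup property $\E^{-2t\wt H_\alpha}(n,n)=\sum_{m}\E^{-t\wt H_\alpha}(n,m)^2\sigma_\alpha(m)^2\ge \E^{-t\wt H_\alpha}(n,0)^2\sigma_\alpha(0)^2$ together with \eqref{eq:erd1}, which pins down the $n=0$ row; and then (c) monotonicity in $m$ of the diagonal entries — provable from the explicit hypergeometric formula, since for $\alpha\ge0$ the ratio $\E^{-tH_\alpha}(m+1,m+1)/\E^{-tH_\alpha}(m,m)$ is a ratio of two polynomials in $t^{-2}$ with nonnegative coefficients and one checks it is $\le 1$ by comparing leading and constant terms. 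Combining (b) for $m=0$ with the monotone decrease in (c) yields $\E^{-tH_\alpha}(m,m)\le\E^{-tH_\alpha}(0,0)=(1+t)^{-(1+\alpha)}$, which is the sought upper bound; together with the trivial lower bound from $m=0$ this gives \eqref{eq:HeatEst1}, and the $m=0$ lower bound alone gives \eqref{eq:HeatEst2} for $\alpha\in(-1,0)$.
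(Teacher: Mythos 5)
Your reduction to the diagonal (positivity, the Cauchy--Schwarz inequality $|\E^{-t\wt H_\alpha}(n,m)|^2\le \E^{-t\wt H_\alpha}(n,n)\,\E^{-t\wt H_\alpha}(m,m)$, and the $m=0$ computation giving the lower bound and \eqref{eq:HeatEst2}) matches the paper exactly, and you correctly isolate the crux: the uniform diagonal bound $\E^{-t\wt H_\alpha}(m,m)\le (1+t)^{-(1+\alpha)}$, equivalently $\left(\frac{t-1}{t+1}\right)^m P_m^{(\alpha,0)}\left(\frac{t^2+1}{t^2-1}\right)\le\binom{m+\alpha}{m}$. But your step (c) does not close this gap. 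First, as literally stated the monotonicity claim is false: by Lemma \ref{lem:diagest01}, $(1+t)^{1+\alpha}\E^{-tH_\alpha}(m,m)\to\sigma_\alpha(m)^2=\binom{m+\alpha}{m}$ as $t\to\infty$, which is strictly increasing in $m$ when $\alpha>0$, so $\E^{-tH_\alpha}(m+1,m+1)/\E^{-tH_\alpha}(m,m)>1$ for large $t$. You presumably mean the weighted diagonal $\E^{-t\wt H_\alpha}(m,m)=\E^{-tH_\alpha}(m,m)/\sigma_\alpha(m)^2$, whose monotone decrease in $m$ would indeed telescope to the desired bound; but the justification offered --- that the ratio of consecutive entries is a quotient of polynomials with nonnegative coefficients which is $\le 1$ ``by comparing leading and constant terms'' --- is not a valid argument (for instance $(1+10x+x^2)/(1+x+x^2)$ passes that test and exceeds $1$ on $(0,\infty)$), and the ratio here is $p_{m+1}(t)/\bigl((1+t)^2 p_m(t)\bigr)$ with $p_m(t)=\sum_{k}\frac{[(-m)_k]^2}{(\alpha+1)_k\,k!}\,t^{2m-2k}$, whose denominator is not even a polynomial in $t^2$. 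Establishing this monotonicity is essentially as hard as the target inequality itself, so the obstacle you yourself named remains open in your argument. (Your step (b) only gives a lower bound on the diagonal and plays no role in the conclusion.)

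The paper closes the gap by a direct computation: it expands $\left(\frac{t-1}{t+1}\right)^n P_n^{(\alpha,0)}\left(\frac{t^2+1}{t^2-1}\right)$ via the Rodrigues formula \eqref{K5} into $(t+1)^{-2n}\sum_{k=0}^n\binom{n+\alpha}{k}\binom{n}{k}t^{2k}$, uses $(t+1)^{2n}>\sum_{k=0}^n\binom{2n}{2k}t^{2k}$, and reduces everything to the termwise combinatorial inequality $\binom{n+\alpha}{k}\binom{n}{k}\le\binom{n+\alpha}{n}\binom{2n}{2k}$ for $0\le k<n$ and $\alpha\ge 0$, which is verified by elementary manipulation of the products. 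Substituting this (or any other proof of the sharp off-interval Jacobi bound) for your step (c) would complete your proof.
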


\begin{proof}
By definition, for $t>0$ we get
\begin{align*}
\|\E^{-t H_\alpha}\|_{\ell^1(\sigma_\alpha)\to \ell^\infty(\sigma_\alpha^{-1})} = \sup_{n,m\in\Z_{\ge 0}} \frac{|\E^{-t H_\alpha}(n,m)|}{\sigma_\alpha(n)\sigma_\alpha(m)} = \|\E^{-t \wt{H}_\alpha}\|_{\ell^1(\sigma_\alpha^2)\to \ell^\infty}.
\end{align*}
Using \eqref{eq:erd1}, we get 
\begin{align*}
\|\E^{-t H_\alpha}\|_{\ell^1(\sigma_\alpha)\to \ell^\infty(\sigma_\alpha^{-1})} \ge \E^{-t H_\alpha}(0,0) = \frac{1}{(1+t)^{1+\alpha}}
\end{align*}
for all $\alpha>-1$ and $t>0$.  
Thus, it remains to show that
\begin{align*}
\|\E^{-t H_\alpha}\|_{\ell^1(\sigma_\alpha)\to \ell^\infty(\sigma_\alpha^{-1})} \le \frac{1}{(1+t)^{1+\alpha}},\quad t>0,
\end{align*}
when $\alpha\ge 0$. By Corollary \ref{cor:DirForm}, $\E^{-t \wt{H}_\alpha}$ is positivity preserving and $\ell^\infty$ contractive and hence
\begin{align*}
0 < \E^{-t \wt{H}_\alpha}(n,n) \le 1
\end{align*}
for all $n\ge 0$ and $t>0$. Therefore, 
\begin{align*}
|\E^{-t \wt{H}_\alpha}(n,m)|^2 \le \E^{-t \wt{H}_\alpha}(n,n)\cdot \E^{-t \wt{H}_\alpha}(m,m) \le \max \Big(\E^{-t \wt{H}_\alpha}(n,n),\E^{-t \wt{H}_\alpha}(m,m) \Big),
\end{align*}
which immediately implies 
\begin{align*}
\|\E^{-t H_\alpha}\|_{\ell^1(\sigma_\alpha)\to \ell^\infty(\sigma_\alpha^{-1})} =  \sup_{n \ge 0} \frac{ \E^{-t H_\alpha}(n,n)}{\sigma_\alpha(n)^2} =  \sup_{n \ge 0}  \E^{-t \wt{H}_\alpha}(n,n). 
\end{align*}
Thus, \eqref{eq:erd3} implies that it suffices to prove the inequality
\begin{align*}
\left(\frac{t-1}{t+1}\right)^n P_n^{(\alpha,0)}\left( \frac{t^2+1}{t^2-1}\right) \le \binom{n+\alpha}{n}
\end{align*}
for all $n\ge 0$ and $t>0$. First, using the Rodrigues formula \eqref{K5}, we get
\begin{align*}
\left(\frac{t-1}{t+1}\right)^n P_n^{(\alpha,0)}\left( \frac{t^2+1}{t^2-1}\right) 
 &=\left(\frac{t-1}{t+1}\right)^n \sum_{k=0}^n \binom{n+\alpha}{n-k} \binom{n}{k} \left(\frac{1}{t^2-1}\right)^k\left(\frac{t^2}{t^2-1}\right)^{n-k} \\
 &= \frac{1}{(t+1)^{2n}} \sum_{k=0}^n \binom{n+\alpha}{n-k} \binom{n}{k} t^{2(n-k)}\\ 
 &= \frac{1}{(t+1)^{2n}} \sum_{k=0}^n \binom{n+\alpha}{k} \binom{n}{k} t^{2k}.
\end{align*}
Since
\begin{align*}
(t+1)^{2n} = \sum_{k=0}^{2n} \binom{2n}{k} t^{k} 
 > \sum_{k=0}^{n}\binom{2n}{2k}t^{2k}
\end{align*}
for all $t>0$ and $n\in\Z_{\ge 1}$, it suffices to show that
\be\label{eq:binomIneq}
\binom{n+\alpha}{k} \binom{n}{k} \le \binom{n+\alpha}{n} \binom{2n}{2k}
\ee
for all $n,k \in \Z_{\ge 0}$ with $k<n$. 
Using \eqref{K6}, it is easy to observe that 
 \eqref{eq:binomIneq} is equivalent to the following inequality
\begin{align*}
\prod_{j=0}^{k-1} \frac{n+\alpha-j}{k-j} \le \prod_{j=0}^{n-1} \frac{n+\alpha-j}{n-j} \prod_{j=0}^{k-1} \frac{2n-2j -1}{2k - 2j - 1}.
\end{align*}
However, the latter holds exactly when 
\begin{align*}
\prod_{j=0}^{k-1} \frac{2k-2j - 1}{k-j} \le  \prod_{j=0}^{k-1} \frac{2n-2j -1}{n-j} \prod_{j=k}^{n-1} \frac{n+\alpha-j}{n-j}.
\end{align*}
Since $k< n$ and $\alpha\ge 0$, this inequality clearly holds 
and hence we arrive at the desired inequality, which finishes the proof of \eqref{eq:HeatEst1}.
\end{proof}

Let us also state explicitly the following result.

\begin{corollary}
If  $\alpha\ge 0$, then
\be\label{eq:HeatEst1wt}
\|\E^{-t \wt{H}_\alpha}\|_{\ell^1(\sigma_\alpha^2)\to \ell^\infty} = \frac{1}{(1+t)^{1+\alpha}},\quad t>0.
\ee
If $\alpha\in(-1,0)$, then
\be\label{eq:HeatEst2wt}
\|\E^{-t H_\alpha}\|_{\ell^1(\sigma_\alpha^2)\to \ell^\infty} \ge \frac{1}{(1+t)^{1+\alpha}},\quad t>0.
\ee
\end{corollary}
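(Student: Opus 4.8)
The plan is to deduce the Corollary directly from Theorem \ref{thm:HeatEst} together with the identity relating the two heat kernels. Recall that by the similarity transformation \eqref{eq:semiWT} and the definitions \eqref{eq:heatMatrix}--\eqref{eq:heatWTMatrix} we have the pointwise relation
\[
\E^{- t\wt{H}_\alpha}(n,m) = \frac{\E^{- t H_\alpha}(n,m)}{\sigma_\alpha(n)\sigma_\alpha(m)},\qquad (n,m)\in\Z_{\ge 0}\times \Z_{\ge 0}.
\]
Consequently, since $\cL_\alpha\colon \ell^2(\sigma_\alpha^2)\to\ell^2$ is an isometric isomorphism and more precisely $\cL_\alpha$ maps $\ell^1(\sigma_\alpha^2)$ isometrically onto $\ell^1(\sigma_\alpha)$ and $\cL_\alpha^{-1}$ maps $\ell^\infty(\sigma_\alpha^{-1})$ isometrically onto $\ell^\infty$, the operator norm is unchanged under the conjugation \eqref{eq:semiWT}. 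This is in fact already recorded in the first display of the proof of Theorem \ref{thm:HeatEst}, namely
\[
\|\E^{-t H_\alpha}\|_{\ell^1(\sigma_\alpha)\to \ell^\infty(\sigma_\alpha^{-1})} = \sup_{n,m\in\Z_{\ge 0}} \frac{|\E^{-t H_\alpha}(n,m)|}{\sigma_\alpha(n)\sigma_\alpha(m)} = \|\E^{-t \wt{H}_\alpha}\|_{\ell^1(\sigma_\alpha^2)\to \ell^\infty}.
\]

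Thus the first assertion \eqref{eq:HeatEst1wt} is immediate: for $\alpha\ge 0$, combining the displayed chain of equalities with \eqref{eq:HeatEst1} gives $\|\E^{-t \wt{H}_\alpha}\|_{\ell^1(\sigma_\alpha^2)\to \ell^\infty} = (1+t)^{-1-\alpha}$ for all $t>0$.

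For the lower bound \eqref{eq:HeatEst2wt} in the range $\alpha\in(-1,0)$, I note that this is a statement about $\E^{-tH_\alpha}$ rather than $\E^{-t\wt H_\alpha}$ (as written in the excerpt), so the cleanest route is to produce an explicit trial vector. Taking $u=\delta_0$, one has $\|\delta_0\|_{\ell^1(\sigma_\alpha^2)} = \sigma_\alpha(0)^2 = 1$ since $\sigma_\alpha(0) = \binom{\alpha}{0}^{1/2} = 1$, while $\|\E^{-tH_\alpha}\delta_0\|_{\ell^\infty} \ge |\E^{-tH_\alpha}(0,0)| = (1+t)^{-1-\alpha}$ by \eqref{eq:erd1} with $m=0$. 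Hence $\|\E^{-t H_\alpha}\|_{\ell^1(\sigma_\alpha^2)\to \ell^\infty} \ge (1+t)^{-1-\alpha}$, which is \eqref{eq:HeatEst2wt}. Equivalently one may simply invoke \eqref{eq:HeatEst2} together with the fact that the $\ell^1(\sigma_\alpha^2)$-norm dominates the $\ell^1(\sigma_\alpha)$-norm on the relevant vectors, but the direct computation with $\delta_0$ is shortest. There is essentially no obstacle here; the only thing to be careful about is bookkeeping the three weighted spaces $\ell^1(\sigma_\alpha)$, $\ell^1(\sigma_\alpha^2)$ and $\ell^\infty(\sigma_\alpha^{-1})$, $\ell^\infty$ correctly, and noting that $\sigma_\alpha(0)=1$ so that $\delta_0$ has unit norm in every one of them.

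\begin{proof}
As observed at the beginning of the proof of Theorem \ref{thm:HeatEst}, the similarity \eqref{eq:semiWT} gives
\begin{align*}
\|\E^{-t H_\alpha}\|_{\ell^1(\sigma_\alpha)\to \ell^\infty(\sigma_\alpha^{-1})} = \sup_{n,m\in\Z_{\ge 0}} \frac{|\E^{-t H_\alpha}(n,m)|}{\sigma_\alpha(n)\sigma_\alpha(m)} = \|\E^{-t \wt{H}_\alpha}\|_{\ell^1(\sigma_\alpha^2)\to \ell^\infty}.
\end{align*}
Hence \eqref{eq:HeatEst1wt} follows immediately from \eqref{eq:HeatEst1}.

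To prove \eqref{eq:HeatEst2wt}, note that $\sigma_\alpha(0) = \binom{\alpha}{0}^{1/2} = 1$, so that $\|\delta_0\|_{\ell^1(\sigma_\alpha^2)} = 1$. By \eqref{eq:erd1} with $m=0$ we have $\E^{-tH_\alpha}(0,0) = (1+t)^{-1-\alpha}$, and therefore
\begin{align*}
\|\E^{-t H_\alpha}\|_{\ell^1(\sigma_\alpha^2)\to \ell^\infty} \ge \|\E^{-tH_\alpha}\delta_0\|_{\ell^\infty} \ge |\E^{-tH_\alpha}(0,0)| = \frac{1}{(1+t)^{1+\alpha}}
\end{align*}
for all $t>0$, which is \eqref{eq:HeatEst2wt}.
\end{proof}
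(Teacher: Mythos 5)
Your proof is correct and follows exactly the route the paper intends: the corollary is stated without proof because it is an immediate restatement of Theorem \ref{thm:HeatEst} via the norm identity $\|\E^{-t H_\alpha}\|_{\ell^1(\sigma_\alpha)\to \ell^\infty(\sigma_\alpha^{-1})} = \|\E^{-t \wt{H}_\alpha}\|_{\ell^1(\sigma_\alpha^2)\to \ell^\infty}$ already recorded in that theorem's proof. Your extra care with the $\delta_0$ trial vector for the lower bound (which also covers the apparent typo $\E^{-tH_\alpha}$ versus $\E^{-t\wt H_\alpha}$ in \eqref{eq:HeatEst2wt}) is sound but not a departure from the paper's argument.
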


\subsection{Transience and stochastic completeness}

The operator $\wt{H}_\alpha$ generates a random walk on $\Z_{\ge 0}$ (see Remark \ref{rem:RW}). Explicit form of the heat kernel enables us to characterize basic properties of the corresponding random walk.

\begin{lemma}\label{lem:RW}
The Markovian semigroup $(\E^{-t\wt{H}_\alpha})_{t>0}$ 
 is transient if and only if $\alpha>0$. 
Moreover, it is stochastically complete (conservative) for all $\alpha>-1$.
\end{lemma}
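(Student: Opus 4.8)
The plan is to characterize transience and stochastic completeness directly through the explicit heat kernel formula from Theorem~\ref{thm:explicit}, using the fact that for a Markovian semigroup these properties are encoded in the behaviour of $\int_0^\infty \E^{-t\wt H_\alpha}(n,n)\,dt$ and in $\sum_{m\ge 0}\E^{-t\wt H_\alpha}(n,m)\sigma_\alpha(m)^2$, respectively. First I would recall the standard criteria: the Markov chain generated by $\wt H_\alpha$ is transient iff the Green's function $G_\alpha(-0;n,n)$ is finite (equivalently iff $\int_0^\infty \E^{-t\wt H_\alpha}(n,n)\,dt<\infty$), and this is already settled by Lemma~\ref{lem:Greenat0}: dividing \eqref{eq:Greenat0} by $\sigma_\alpha(n)^2$ gives $G_\alpha(-0;n,n)/\sigma_\alpha(n)^2 = 1/\alpha$ for $\alpha>0$ and $+\infty$ for $\alpha\in(-1,0]$. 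Hence transience is equivalent to $\alpha>0$, and recurrence (persistence) holds exactly when $\alpha\le 0$. I would also note the alternative self-contained route via \eqref{eq:erd3}: $\int_0^\infty \E^{-t\wt H_\alpha}(n,n)\,dt = \int_0^\infty (1+t)^{-1-\alpha}\big(\tfrac{t-1}{t+1}\big)^n P_n^{(\alpha,0)}\big(\tfrac{t^2+1}{t^2-1}\big)\,dt$; the integrand decays like $t^{-1-\alpha}$ as $t\to\infty$ by Lemma~\ref{lem:diagest01}, so the integral converges iff $\alpha>0$, and one checks there is no obstruction at $t=0$ or $t=1$.

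For stochastic completeness the key observation is the one flagged in Remark~\ref{rem:heat=Meixner}: the row sums $\sum_{m\ge0}\E^{-t\wt H_\alpha}(n,m)\sigma_\alpha(m)^2$ (which by \eqref{eq:heatWTaction} equal $(\E^{-t\wt H_\alpha}\mathbbm 1)_n$) should equal $1$ for all $n$ and $t>0$. Using $\sigma_\alpha(m)^2 = (\alpha+1)_m/m!$ and \eqref{eq:heat=Meixner}, the $n$-th row sum becomes
\begin{align*}
\sum_{m\ge 0}\E^{- t\wt H_\alpha}(n,m)\sigma_\alpha(m)^2 = \frac{1}{(1+t)^{\alpha+1}}\Big(\frac{t}{t+1}\Big)^{n}\sum_{m\ge 0}\frac{(\alpha+1)_m}{m!}\Big(\frac{t}{t+1}\Big)^{m} M_n\Big(m;1+\alpha,\tfrac{t^2}{t^2-1}\Big).
\end{align*}
The plan is to evaluate the inner sum via the generating function for Meixner polynomials \dlmf{18.23.4}, namely $\sum_{m\ge 0}\frac{(\beta)_m}{m!}M_n(m;\beta,c)\,z^m = (1-z)^{-\beta-n}(1-z/c)^{n}$ valid for $|z|<1$ (and one must check $|z|=|t/(t+1)|<1$, which holds for all $t>0$). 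Substituting $\beta=1+\alpha$, $c=t^2/(t^2-1)$, $z=t/(t+1)$ and simplifying $1-z = 1/(t+1)$ and $1-z/c = (t+1)^{-1}(t-1)/t \cdot$(a clean algebraic factor) should collapse everything to exactly $1$; I would carry out this elementary but slightly fiddly algebra to confirm the cancellation of all $t$-dependent factors. Since $\E^{-t\wt H_\alpha}$ is positivity preserving and $\ell^\infty$-contractive (Corollary~\ref{cor:DirForm}), $(\E^{-t\wt H_\alpha}\mathbbm 1)_n\le 1$ automatically, so the identity $=1$ is precisely stochastic completeness; this works for all $\alpha>-1$.

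The main obstacle is purely bookkeeping rather than conceptual: one must pick the correct normalization/form of the Meixner generating function (several equivalent forms appear in the literature, differing by where the parameter $c$ sits), verify the radius-of-convergence condition $|t/(t+1)|<1$ for all $t>0$, and then push through the algebraic simplification showing that $(1+t)^{-\alpha-1}(t/(t+1))^n \cdot (1-z)^{-(1+\alpha)-n}(1-z/c)^n$ reduces to $1$. A secondary subtlety is that the generating-function identity is a priori a formal/analytic identity in $z$, so I would justify the termwise summation either by absolute convergence (the Meixner polynomials $M_n(m;\beta,c)$ grow polynomially in $m$ for fixed $n$, dominated by the geometric weight $z^m$) or by analytic continuation from a neighbourhood of $z=0$. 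Once these points are handled, both claims follow; I would present transience via Lemma~\ref{lem:Greenat0} as the short argument and stochastic completeness via the Meixner generating function as the substantive one.
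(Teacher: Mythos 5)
Your proposal is correct and follows essentially the same route as the paper: transience via the Green's function limit of Lemma \ref{lem:Greenat0} (equivalently the on-diagonal decay of Lemma \ref{lem:diagest01}), and stochastic completeness by evaluating the row sums $(\E^{-t\wt H_\alpha}\id)_n$ through the Meixner generating function applied to \eqref{eq:heat=Meixner}. The only cosmetic difference is that the paper first invokes the self-duality $M_n(m;\beta,c)=M_m(n;\beta,c)$ so as to apply the generating function in its standard form (sum over the degree index), which is exactly the normalization bookkeeping you flagged.
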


\begin{proof}
The first claim can be deduced either from Lemma \ref{lem:Greenat0} or Lemma \ref{lem:diagest01} (see \cite[Lemma 1.5.1]{fuk10}). 

Recall that $(\E^{-t\wt{H}_\alpha})_{t>0}$ is called stochastically complete (conservative) if 
\be\label{eq:SCdef}
\E^{-t\wt{H}_\alpha} \id = \id
\ee
for some (and hence for all) $t>0$. Taking into account Remark \ref{rem:heat=Meixner}, 
\eqref{eq:SCdef} follows from \dlmf{18.23.3}. Indeed, the generating function for Meixner polynomials is given by 
\begin{align*}
\sum_{n\ge 0}\frac{(\beta)_n}{n!} M_n(x;\beta,c) z^n = \Big(1-\frac{z}{c}\big)^x (1-z)^{-x-\beta},
\end{align*}
where $x\in\Z_{\ge 0}$ and $|z|<1$. However, by \eqref{eq:heatWTaction} and \eqref{eq:heat=Meixner}, we get
\begin{align*}
(\E^{-t\wt{H}_\alpha}\id)_n & = \sum_{m\ge 0} \E^{-t\wt{H}_\alpha}(n,m) \sigma_\alpha(m)^2  \\
& = \frac{1}{(1+t)^{\alpha+1}}\Big(\frac{t}{t+1}\Big)^{n} \sum_{m\ge 0} \frac{(\alpha+1)_m}{m!} M_n\Big(m;1+\alpha,\frac{t^2}{t^2-1}\Big) \Big(\frac{t}{t+1}\Big)^{m} \\
& = \frac{1}{(1+t)^{\alpha+1}}\Big(\frac{t}{t+1}\Big)^{n} \sum_{m\ge 0} \frac{(\alpha+1)_m}{m!} M_m\Big(n;1+\alpha,\frac{t^2}{t^2-1}\Big) \Big(\frac{t}{t+1}\Big)^{m} \\
& = \frac{1}{(1+t)^{\alpha+1}}\Big(\frac{t}{t+1}\Big)^{n} \Big(1-\frac{t-1}{t}\Big)^n \Big(1-\frac{t}{t+1}\Big)^{-n-\alpha-1} = 1. \qedhere
\end{align*}
\end{proof}

\begin{remark}\label{rem:SC}
A few remarks are in order.
\begin{itemize}
\item[(i)] For large classes of graphs there are rather transparent geometric criteria for stochastic completeness (e.g., via volume growth). In particular, applying \cite[Theorem 5]{klw} (see also \cite[Chapter 9]{klwBook}) to \eqref{eq:StringDiff}, we get 
\begin{align*}
\sum_{n\ge 0}\sigma_\alpha(n)^2 \omega_\alpha(n) = \sum_{n\ge 0}\frac{1}{n+\alpha+1} = \infty,
\end{align*}  
which implies stochastic completeness. Taking into account \eqref{eq:SCdef} this provides another derivation of the generating function for Meixner polynomials.
\item[(ii)] Notice that according to the Khas'minskii-type theorem, stochastic completeness implies uniqueness of the Cauchy problem for \eqref{eq:heatWT} in $\ell^\infty$ (respectively, for \eqref{eq:heat} in $\ell^\infty(\sigma_\alpha)$). 
 We do not plan to discuss this issue here and only refer for further details to, e.g., \cite{fuk10}, \cite{klw}, \cite[Chapter 7]{klwBook}.
\end{itemize}
\end{remark}

\section{Eigenvalue estimates}\label{sec:V}

Consider the perturbed operator
\be\label{eq:HwithV}
H_{\alpha,V}:=H_\alpha - V,
\ee
where $V$ is a multiplication operator on $\ell^2(\Z)$ given by
\be\label{eq:Vmult}
(Vu)_n := v_n u_n,\quad n\in \Z_{\ge0}.
\ee
We shall always assume that $(v_n)_{n\ge 0}$ is a real sequence. If $V$ is unbounded (i.e., $(v_n)_{n\ge 0}$ is unbounded),  we define the operator $H_{\alpha,V}$ as the maximal operator (analogous to $H_{\alpha}$)  and this operator is self-adjoint according to the Carleman test (see, e.g., \cite{akh}). We shall denote the total multiplicity of the negative spectrum of $H_{\alpha,V}$ by $\kappa_-(H_{\alpha,V})$. Notice that the spectrum of a semi-infinite Jacobi matrix is always simple and hence $\kappa_-(H_{\alpha,V})$
is the number of negative eigenvalues of $H_{\alpha,V}$ if the negative spectrum of $H_{\alpha,V}$ is discrete. 

For a real sequence $v = (v_n)_{n\ge 0}\subset \R$, denote $v^{\pm}:= (|v|\pm v)/2$. Let also $V^\pm$ be the corresponding multiplication operators.  Since $V=V^+ - V^-$, the min-max principle implies the following standard estimate
\be\label{eq:EVest+-}
\kappa_-(H_{\alpha,V}) \le \kappa_-(H_{\alpha,V^+}).
\ee

\subsection{Rank one perturbations}
We begin with the simplest possible case, which however demonstrates several important features.
Let us consider the operator 
\be\label{eq:Hrank1}
H_\alpha(v_n) : = H_\alpha - v_n\langle \cdot, \delta_n\rangle \delta_n,\quad v_n>0.
\ee
Thus, $H_\alpha(v_n)$ is a rank one perturbation of the operator $H_\alpha$ (the corresponding matrix coincides with \eqref{eq:H0} except the coefficient $h^{(\alpha)}_{n,n}$ replaced by $h^{(\alpha)}_{n,n} - v_n$).

\begin{lemma}\label{lem:rank1est}
Let $v_n>0$. If $\alpha\in (-1,0]$, then 
\be
\kappa_-(H_\alpha(v_n)) = 1.
\ee
If $\alpha>0$, then 
\be
\kappa_-(H_\alpha(v_n)) = \begin{cases} 0, & v_n\in (0,\alpha], \\ 1, & v_n>\alpha .\end{cases}
\ee
\end{lemma}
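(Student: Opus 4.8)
The plan is to analyze the rank one perturbation $H_\alpha(v_n) = H_\alpha - v_n\langle\cdot,\delta_n\rangle\delta_n$ via the standard Aronszajn--Krein (rank one) theory, where the negative spectrum is governed by the sign of $1 - v_n G_\alpha(x;n,n)$ for $x<0$. Concretely, since $H_\alpha \ge 0$ and $V = v_n\langle\cdot,\delta_n\rangle\delta_n$ is rank one and positive, $H_\alpha(v_n)$ has at most one negative eigenvalue, and it has exactly one precisely when the equation $1 = v_n G_\alpha(x;n,n)$ has a solution $x<0$, equivalently (since $x\mapsto G_\alpha(x;n,n)$ is continuous, strictly increasing on $(-\infty,0)$, positive, and tends to $0$ as $x\to-\infty$) precisely when $v_n G_\alpha(-0;n,n) > 1$, i.e. $v_n > 1/G_\alpha(-0;n,n)$. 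So the whole statement reduces to reading off $G_\alpha(-0;n,n)$ from Lemma~\ref{lem:Greenat0}.

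First I would establish the rank one reduction. Using the second resolvent identity (or the Aronszajn--Krein formula), for $x<0$ one has $x\in\sigma_p(H_\alpha(v_n))$ if and only if $1 - v_n\langle(H_\alpha - x)^{-1}\delta_n,\delta_n\rangle = 0$, that is $v_n G_\alpha(x;n,n) = 1$. Monotonicity and positivity of $x\mapsto G_\alpha(x;n,n)$ on $(-\infty,0)$ follow from the spectral representation $G_\alpha(x;n,n) = \int_0^\infty \frac{d\mu_n(\lambda)}{\lambda - x}$ with $\mu_n$ a positive measure (here $\mu_n$ is the spectral measure of $\delta_n$, which is nontrivial). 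Combined with $\lim_{x\to-\infty}G_\alpha(x;n,n) = 0$, the equation has a (necessarily unique) solution $x<0$ iff $v_n G_\alpha(-0;n,n) > 1$; if $v_n G_\alpha(-0;n,n) \le 1$ there is no negative eigenvalue (and since $\sigma_{\mathrm{ess}}(H_\alpha(v_n)) = \sigma_{\mathrm{ess}}(H_\alpha) = [0,\infty)$ by rank one stability, there is no negative spectrum at all). That $\kappa_-\le 1$ is simply the min-max principle for a rank one nonnegative perturbation.

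Now I would plug in Lemma~\ref{lem:Greenat0} with $n=m$. If $\alpha\in(-1,0]$, then $G_\alpha(-0;n,n) = +\infty$, so $v_n G_\alpha(-0;n,n) = +\infty > 1$ for every $v_n>0$, giving $\kappa_-(H_\alpha(v_n)) = 1$. If $\alpha>0$, then $G_\alpha(-0;n,n) = \frac{1}{\alpha}\cdot\frac{\sigma_\alpha(n)}{\sigma_\alpha(n)} = \frac1\alpha$, so $v_n G_\alpha(-0;n,n) > 1 \iff v_n > \alpha$; hence $\kappa_-(H_\alpha(v_n)) = 1$ for $v_n > \alpha$ and $\kappa_-(H_\alpha(v_n)) = 0$ for $v_n\in(0,\alpha]$ (the endpoint $v_n=\alpha$ gives no eigenvalue because the sup of $G_\alpha(x;n,n)$ over $x<0$ equals $1/\alpha$ but is not attained, as $G_\alpha(x;n,n)<G_\alpha(-0;n,n)$ for all $x<0$ by strict monotonicity). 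This establishes the claimed formula.

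The only real subtlety — and the step I would be most careful about — is justifying the boundary behavior: that $\sup_{x<0}G_\alpha(x;n,n) = G_\alpha(-0;n,n)$ and that the supremum is not attained, so that the critical case $v_n=\alpha$ lands in $\kappa_-=0$ rather than $\kappa_-=1$. This is handled by monotone convergence applied to $\int_0^\infty\frac{d\mu_n(\lambda)}{\lambda-x}$ as $x\uparrow 0$, together with strict monotonicity of the integral in $x$ for $x<0$ (using that $\mu_n$ has mass on $(0,\infty)$). One should also note that the formula for $G_\alpha(-0;n,n)$ in Lemma~\ref{lem:Greenat0} is stated as $\lim_{x\uparrow-0}$, so the identification of this limit with $\sup_{x<0}$ is immediate once monotonicity is known. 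Everything else is routine rank one perturbation theory.
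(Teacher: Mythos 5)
Your proposal is correct and follows essentially the same route as the paper: reduce to the scalar equation $v_n G_\alpha(E;n,n)=1$ for $E<0$, use monotonicity of $E\mapsto G_\alpha(E;n,n)$ together with the limits at $-\infty$ and $-0$ from Lemma \ref{lem:Greenat0}, and conclude that the range of $G_\alpha(\cdot;n,n)$ on $(-\infty,0)$ is $(0,\alpha^{-1})$ for $\alpha>0$ and $(0,\infty)$ for $\alpha\in(-1,0]$. Your extra care about the endpoint $v_n=\alpha$ (the supremum not being attained) is exactly the point the paper encodes by saying the map is \emph{onto} the open interval $(0,\alpha^{-1})$.
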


\begin{proof}
Since $H_\alpha$ is a positive operator, $\kappa_-(H_\alpha(v_n)) \le 1$. Suppose that $E<0$ is an eigenvalue of $H_\alpha(v_n)$, that is, there exists $f\in \ell^2(\Z_{\ge 0})$ such that 
\begin{align*}
H_\alpha(v_n) f = Ef.
\end{align*}
Therefore, we get
\begin{align*}
(H_\alpha - E)f  = v_n\langle f, \delta_n\rangle \delta_n , 
\end{align*}
which shows that 
\begin{align*}
f = v_n\langle f, \delta_n\rangle (H_\alpha - E)^{-1} \delta_n.
\end{align*}
Hence by \eqref{eq:H0resolv}
\be\label{eq:VviaG}
\frac{1}{v_n} = \big\langle (H_\alpha - E)^{-1} \delta_n ,\delta_n \big\rangle = G_\alpha(E;n,n).
\ee
Since $H_\alpha$ is positive, $G_\alpha(\cdot;n,n)$ is increasing on $(-\infty,0)$. Moreover, $G_\alpha(E;n,n) \to 0$ as $E\to -\infty$. 
Therefore, by \eqref{lem:Greenat0} $G_\alpha(\cdot;n,n)$ maps $(-\infty,0)$ onto $\R_{>0}$ if $\alpha\in (-1,0]$ and onto $(0,\alpha^{-1})$ if $\alpha>0$, which implies that \eqref{eq:VviaG} has a solution exactly when 
\begin{align*}
v_n \in \begin{cases} (0,\infty), & \alpha\in (0,1], \\ (\alpha,\infty), & \alpha>0 .\end{cases}
\end{align*}
This immediately proves the desired claim.
\end{proof}

\begin{remark}
Notice that in the case $n=0$ the corresponding eigenvalue $\lambda(v_0)$ is explicitly given by
\be
\lambda(v_0) = m^{-1}_\alpha(1/v_0) < 0,
\ee
where $m_\alpha$ is the Weyl function \eqref{eq:malpha}. The case $\alpha = 0$ has been addressed in \cite{ks15b}.
\end{remark}

\subsection{CLR and Bargmann-type bounds}

We begin with the following extension of Lemma \ref{lem:rank1est}.

\begin{lemma}\label{lem:criticalA}
Let $\alpha\in (-1,0]$. If $v=v^+\not\equiv 0$, then $\kappa_-(H_{\alpha,V})\ge 1$. 
\end{lemma}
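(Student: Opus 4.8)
The statement claims that for $\alpha \in (-1,0]$, any nontrivial nonnegative perturbation $V$ (i.e.\ $v = v^+ \not\equiv 0$) produces at least one negative eigenvalue. The key mechanism, already visible in Lemma~\ref{lem:rank1est} and Lemma~\ref{lem:Greenat0}, is that the Green's function $G_\alpha(-0; n,m)$ is identically $+\infty$ when $\alpha \in (-1,0]$ — this is the ``zero-energy resonance'' phenomenon. The plan is to reduce to a finite-rank perturbation and exploit the Birman--Schwinger principle.

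First I would observe that it suffices to treat the case where $V$ has finite support: since $v \geq 0$ and $v \not\equiv 0$, pick any $n_0$ with $v_{n_0} > 0$ and set $\wt V := v_{n_0}\langle\,\cdot\,,\delta_{n_0}\rangle\delta_{n_0}$. Then $0 \leq \wt V \leq V$, so by the min-max principle $\kappa_-(H_{\alpha,V}) \geq \kappa_-(H_\alpha - \wt V) = \kappa_-(H_\alpha(v_{n_0}))$, and the latter equals $1$ by Lemma~\ref{lem:rank1est} since $\alpha \in (-1,0]$. This gives $\kappa_-(H_{\alpha,V}) \geq 1$ directly.

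Alternatively — and this is the argument I would actually write, since it foreshadows the quantitative CLR/Bargmann bounds and the Hardy inequality to come — one can argue variationally with a trial sequence. The form $\gt_\alpha$ has the factorized representation \eqref{eq:gtafact}, so $\gt_\alpha[u] = \sum_{n\geq 0}|\sqrt{n+\alpha+1}\,u_n - \sqrt{n+1}\,u_{n+1}|^2$. The point is that for $\alpha \leq 0$ this form is ``almost critical'': the constant-like sequence is nearly in its kernel. Concretely, take $u^{(N)}_n := 1$ for $n \leq N$ and $u^{(N)}_n := 0$ for $n > N$ (or a smoothed cutoff), and compute $\gt_\alpha[u^{(N)}]$. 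Since $\sqrt{n+\alpha+1} - \sqrt{n+1} = O(n^{-1/2})$ (by the estimate at the end of the proof of Lemma~\ref{lem:form}), the ``bulk'' contribution $\sum_{n<N}|\sqrt{n+\alpha+1}-\sqrt{n+1}|^2 = O(\log N)$ when $\alpha \neq 0$, and the boundary term at $n = N$ is $O(1)$; for $\alpha = 0$ one gets $\gt_0[u^{(N)}] = N+1$ from the single boundary term. In all cases $\gt_\alpha[u^{(N)}]/\|u^{(N)}\|_{\ell^2}^2 \to 0$ as $N \to \infty$, since $\|u^{(N)}\|_{\ell^2}^2 = N+1$ grows linearly. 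Meanwhile, choosing $N \geq n_0$ where $v_{n_0} > 0$, we have $\langle V u^{(N)}, u^{(N)}\rangle \geq v_{n_0} > 0$, a fixed positive constant independent of $N$. Hence for $N$ large enough $\gt_\alpha[u^{(N)}] - \langle V u^{(N)}, u^{(N)}\rangle < 0$, so the quadratic form of $H_{\alpha,V}$ is negative on a nonzero vector, which by the min-max principle forces $\kappa_-(H_{\alpha,V}) \geq 1$.

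The main obstacle is purely bookkeeping: one must verify that the normalized trial sequences genuinely drive the form quotient to zero, i.e.\ that the growth of $\gt_\alpha[u^{(N)}]$ (logarithmic for $-1<\alpha<0$, linear for $\alpha = 0$) is strictly slower than the linear growth of $\|u^{(N)}\|^2_{\ell^2}$ — the $\alpha = 0$ case is the delicate one and needs the cutoff to be placed so that only the single jump at $n=N$ contributes $N+1$ against $\|u^{(N)}\|^2 = N+1$, which is \emph{not} $o(1)$; there one must instead use a graded cutoff $u^{(N)}_n = \max(1 - n/N, 0)$ or similar, for which $\gt_0[u^{(N)}] = \sum_{n<N}(n+1)/N^2 = O(1)$ while $\|u^{(N)}\|^2 \sim N/3$, giving the required ratio $\to 0$. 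I would therefore state the argument with the graded cutoff uniformly in $\alpha \in (-1,0]$ to avoid case distinctions. Given the brevity of the claim, the cleanest exposition is in fact the first (rank-one reduction) argument, which is a two-line consequence of Lemma~\ref{lem:rank1est} and \eqref{eq:EVest+-}; I would lead with that and mention the variational viewpoint as a remark.
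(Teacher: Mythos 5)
Your lead argument --- comparing $V$ from below with the rank--one perturbation $v_{n_0}\langle\,\cdot\,,\delta_{n_0}\rangle\delta_{n_0}$ and invoking Lemma~\ref{lem:rank1est} together with the min--max principle --- is exactly the paper's proof, and it is correct; nothing more is needed.

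The variational alternative you sketch, however, should not be added as a remark in its present form. First, for the sharp cutoff the boundary term at $n=N$ is $|\sqrt{N+\alpha+1}\,u_N|^2=N+\alpha+1=\OO(N)$, not $\OO(1)$. More importantly, the relevant quantity is not the Rayleigh quotient but $\gt_\alpha[u^{(N)}]-\langle Vu^{(N)},u^{(N)}\rangle$: with the linear graded cutoff one has $\gt_\alpha[u^{(N)}]\asymp\log N$ for $\alpha\in(-1,0)$ (the terms $(\sqrt{n+\alpha+1}-\sqrt{n+1})^2\asymp n^{-1}$ are not summable against $u_n\equiv 1$) and $\gt_0[u^{(N)}]\to\tfrac12$ for $\alpha=0$, while $\langle Vu^{(N)},u^{(N)}\rangle\le\sum_n v_n$ may be arbitrarily small, so the difference need not become negative. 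To repair the argument one must first pass to the ground--state representation, i.e.\ take trial sequences of the form $u_n=\sigma_\alpha(n)\phi_n$ so that $\gt_\alpha[u]=\sum_n\omega_\alpha(n)^{-1}|\phi_n-\phi_{n+1}|^2$, and then choose $\phi$ as a linear cutoff for $\alpha<0$ (giving $\gt_\alpha[u^{(N)}]=\OO(N^{\alpha})\to0$) and as a logarithmic cutoff for $\alpha=0$ (giving $\OO(1/\log N)$); this is precisely the recurrence of $\wt H_\alpha$ for $\alpha\le 0$ from Lemma~\ref{lem:RW}.
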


\begin{proof}
The proof is immediate from Lemma \ref{lem:rank1est} and the min-max principle.
\end{proof}

Our main aim is to extend the second claim in Lemma \ref{lem:rank1est} to more general potentials. We begin with the following result, which may be seen as the analog of the famous Cwikel--Lieb--Rozenblum bound (see \cite{fls}, \cite{ls97}, \cite[Theorem XIII.12]{rsIV}).

\begin{theorem}\label{th:CLR}
Let $\alpha>0$ and $v_+\in \ell^{1+\alpha}(\Z_{\ge 0};\sigma_\alpha^2)$. Then the operator $H_{\alpha,V}$ is bounded from below, its negative spectrum is discrete and, moreover, there is a constant $C=C(\alpha)>0$ (independent of $v$) such that 
\be\label{eq:CLR}
\kappa_-(H_{\alpha,V}) \le C(\alpha) \sum_{n\ge 0} (v_n^+)^{1+\alpha}\frac{(\alpha+1)_n}{n!}.
\ee
\end{theorem}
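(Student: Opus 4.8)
The plan is to deduce \eqref{eq:CLR} from the ultracontractivity estimate \eqref{eq:HeatEst1wt} via the Varopoulos--Carlen--Kusuoka--Stroock machinery, applied to the Dirichlet form $\wt{\gt}_\alpha$ on $\ell^2(\Z_{\ge 0};\sigma_\alpha^2)$ rather than directly to $H_\alpha$. First I would pass from $H_{\alpha,V}$ to the unitarily equivalent operator $\wt{H}_{\alpha,V} := \cL_\alpha^{-1}H_{\alpha,V}\cL_\alpha = \wt{H}_\alpha - V$ acting in $\ell^2(\Z_{\ge 0};\sigma_\alpha^2)$; since $\cL_\alpha$ is a unitary intertwiner and $V$ is a multiplication operator, $\kappa_-(H_{\alpha,V}) = \kappa_-(\wt{H}_{\alpha,V})$ and the weighted $\ell^{1+\alpha}$-norm of $v^+$ is exactly $\sum_n (v_n^+)^{1+\alpha}\sigma_\alpha(n)^2$. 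By the min-max principle and \eqref{eq:EVest+-} it suffices to bound $\kappa_-(\wt{H}_\alpha - V^+)$, so I may assume $v = v^+ \ge 0$.

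Next I would record that \eqref{eq:HeatEst1wt} reads $\|\E^{-t\wt{H}_\alpha}\|_{\ell^1(\sigma_\alpha^2)\to\ell^\infty} \le (1+t)^{-(1+\alpha)} \le \min(1, t^{-(1+\alpha)})$ for all $t>0$, i.e.\ the Markovian semigroup $\E^{-t\wt{H}_\alpha}$ is ultracontractive with dimension $d = 2(1+\alpha) > 2$ in the Nash/Varopoulos sense. By the theorem of Varopoulos (see \cite{ls97} and the discussion in the Introduction), such an on-diagonal heat-kernel bound for a symmetric Markovian semigroup is equivalent to a Sobolev-type (Nash) inequality: there is $C_S = C_S(\alpha)$ with
\be\label{eq:Sobalpha}
\|u\|_{\ell^{2d/(d-2)}(\sigma_\alpha^2)}^2 \le C_S(\alpha)\, \wt{\gt}_\alpha[u],\qquad d = 2(1+\alpha),
\ee
for all $u$ in the form domain. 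Then the CLR-type bound follows from the Sobolev inequality \eqref{eq:Sobalpha} by the Cwikel--Lieb--Rosenblum argument in the form given by Levin--Solomyak / Frank--Lieb--Seiringer \cite{ls97,fls}: for a Dirichlet form satisfying a Sobolev inequality with exponent $d>2$, the number of negative eigenvalues of the perturbed form $\wt{\gt}_\alpha[u] - \sum_n v_n^+ |u_n|^2 \sigma_\alpha(n)^2$ is bounded by $C(\alpha)\sum_n (v_n^+)^{d/2}\sigma_\alpha(n)^2 = C(\alpha)\sum_n (v_n^+)^{1+\alpha}(\alpha+1)_n/n!$. The boundedness from below of $\wt{H}_{\alpha,V}$ and the discreteness of its negative spectrum are part of this same package: once $v^+ \in \ell^{1+\alpha}(\sigma_\alpha^2)$, the potential is form-compact relative to $\wt{H}_\alpha$ (it is a vanishing-at-infinity sequence since it lies in $\ell^{1+\alpha}$, and one estimates $\sum_n v_n^+|u_n|^2\sigma_\alpha^2$ by splitting off finitely many indices and using \eqref{eq:Sobalpha} on the tail), so Weyl's theorem gives $\sigma_{\mathrm{ess}}(\wt{H}_{\alpha,V}) = \sigma_{\mathrm{ess}}(\wt{H}_\alpha) = [0,\infty)$ and the negative part is finite and discrete.

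The main obstacle is a soft, bookkeeping one rather than a hard estimate: one must verify that the abstract CLR bound of \cite{ls97,fls} is being applied to the right object, namely the \emph{weighted} space $\ell^2(\Z_{\ge 0};\sigma_\alpha^2)$ with its genuinely Markovian (Dirichlet) form $\wt{\gt}_\alpha$ from \eqref{eq:wtform} — this is precisely why the paper introduced $\wt{H}_\alpha$, since $H_\alpha$ itself is not Markovian for $\alpha\neq 0$ (Remark \ref{rem:DirForm}) and the general CLR/Sobolev theory is stated for Dirichlet forms. The chain ``ultracontractivity $\Leftrightarrow$ Sobolev inequality $\Rightarrow$ CLR'' is then entirely standard: I would not reprove it but cite \cite{ls97}, \cite{fls}, and \cite[Theorem XIII.12]{rsIV}, noting only that the underlying measure here is $\sigma_\alpha^2$, which is infinite (Remark \ref{rem:SC}), so the parabolic/Nash route rather than a compact-manifold argument is the appropriate one. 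The constant $C(\alpha)$ produced this way is of course not sharp, consistent with the remark in the Introduction that the optimal constant in \eqref{eq:CLR} is left open.
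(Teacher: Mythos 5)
Your proposal is correct and follows essentially the same route as the paper: reduce to $V=V^+$, pass to the Markovian operator $\wt{H}_\alpha$ on $\ell^2(\Z_{\ge 0};\sigma_\alpha^2)$, invoke Varopoulos's theorem to convert the ultracontractivity bound \eqref{eq:HeatEst1wt} into the Sobolev-type inequality \eqref{eq:Sobalpha}, and then apply the Levin--Solomyak CLR machinery before transferring back by unitary equivalence. The only cosmetic difference is that you phrase the Sobolev inequality in terms of the dimension parameter $d=2(1+\alpha)$, which agrees with the exponent $2+2/\alpha$ appearing in the paper's \eqref{eq:Sobalpha}.
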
 

\begin{proof}
By \eqref{eq:EVest+-}, we can assume that $V= V^+$, that is, $v_n=v_n^+ \ge 0$ for all $n\ge 0$. 
Taking into account that the operator $\wt{H}_\alpha$ is Markovian, by Varopoulos's theorem (see \cite[Theorem II.5.2]{var}), \eqref{eq:HeatEst1wt} is equivalent to the validity of the following Sobolev-type inequality
\be\label{eq:Sobalpha}
\Big( \sum_{n\ge 0} |u_n|^{2+\frac{2}{\alpha}}\frac{(\alpha+1)_{n}}{n!}\Big)^{\frac{2\alpha}{2\alpha+2}} \le C_1(\alpha) \sum_{n\ge 0}\frac{(\alpha+1)_{n+1}}{n!}|u_n - u_{n+1}|^2,
\ee
for all $u \in \dom(\wt{\gt}_\alpha)$ and the constant $C_1(\alpha)$ depends only on $\alpha$. By Theorem 1.2 from \cite{ls97}, the latter implies that for each $v$ with $v_+\in \ell^{1+\alpha}(\Z_{\ge 0};\sigma_\alpha^2)$ 
the operator $\wt{H}_{\alpha,V} = \wt{H}_\alpha - V$ is bounded from below, its negative spectrum is discrete and, moreover, there is a constant $C=C(\alpha)>0$ such that 
\be\label{eq:CLRtild}
\kappa_-(\wt{H}_{\alpha,V}) \le C(\alpha) \sum_{n\ge 0} v_n^{1+\alpha}\frac{(\alpha+1)_n}{n!}.
\ee
It remains to notice that the operators $\wt{H}_{\alpha,V}$ and $H_{\alpha,V}$ are unitarily equivalent since $V$ commutes with $\cL_\alpha$ and thus 
\begin{equation*}
\kappa_-(H_{\alpha,V}) = \kappa_-(H_\alpha - V) = \kappa_-(\cL_\alpha (\wt{H}_\alpha - V) \cL_\alpha^{-1}) = \kappa_-(\wt{H}_\alpha - V)
= \kappa_-(\wt{H}_{\alpha,V}).\qedhere
\end{equation*}
\end{proof}

\begin{remark}\label{rem:CLRconst}
Taking into account \eqref{eq:HeatEst1wt}, $C_1(\alpha) \ge \frac{1}{1+\alpha}$ (see \cite[Remark 2.1]{fls}). Moreover, the constants $C(\alpha)$ and $C_1(\alpha)$ satisfy (see \cite[Theorem 2.1]{fls})
\be
C_1(\alpha)^{1+\alpha} \le C(\alpha) \le \E^\alpha C_1(\alpha)^{1+\alpha}.
\ee
The optimal constants in \eqref{eq:CLR} and \eqref{eq:Sobalpha} remain an open problem. 
\end{remark}

We finish this section with another estimate, which can be seen as the analog of the Bargmann bound (see, e.g., \cite[Theorem XIII.9(a)]{rsIV}.

\begin{theorem}\label{th:kappa-}
If $\alpha>0$, then
\be\label{eq:bargm}
\kappa_-(H_{\alpha,V}) \le \frac{1}{\alpha}\sum_{n\ge 0} v_n^+\, \frac{(\alpha+1)_n}{n!}. 
\ee
\end{theorem}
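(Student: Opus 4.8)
The plan is to reduce \eqref{eq:bargm} to a Bargmann-type bound for the weighted operator $\wt H_\alpha$ and then to exploit the explicit form of the zero-energy Green's function from Lemma~\ref{lem:Greenat0}. As in the proof of Theorem~\ref{th:CLR}, we may assume $V=V^+$, so that $v_n\ge 0$, and since $\cL_\alpha$ intertwines $H_{\alpha,V}$ with $\wt H_{\alpha,V}=\wt H_\alpha-V$ (because $V$ is a multiplication operator and commutes with $\cL_\alpha$), it is equivalent to bound $\kappa_-(\wt H_{\alpha,V})$. The natural quantity here is the Green's function of $\wt H_\alpha$ at energy $-0$: from \eqref{eq:wtHalpha} and Lemma~\ref{lem:Greenat0} one gets, for $\alpha>0$,
\be
\wt G_\alpha(-0;n,m) = \frac{G_\alpha(-0;n,m)}{\sigma_\alpha(n)\sigma_\alpha(m)} = \frac{1}{\alpha}\,\frac{1}{\sigma_\alpha(\max(n,m))^2} = \frac{1}{\alpha}\,\frac{\min(n,m)!}{(\alpha+1)_{\max(n,m)}\,\cdot\,(\text{the appropriate normalization})},
\ee
which in particular has the bounded diagonal $\wt G_\alpha(-0;n,n) = \frac{1}{\alpha}\,\frac{n!}{(\alpha+1)_n}=\frac{1}{\alpha\,\sigma_\alpha(n)^2}$; this finiteness is precisely the transience established in Lemma~\ref{lem:RW}.

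Next I would invoke the Birman--Schwinger principle: $E<0$ is an eigenvalue of $\wt H_{\alpha,V}=\wt H_\alpha-V$ of multiplicity $k$ iff $1$ is an eigenvalue of the Birman--Schwinger operator $K_{\wt H}(E)=V^{1/2}(\wt H_\alpha-E)^{-1}V^{1/2}$ of multiplicity $k$, where the adjoint/symmetrization is taken in $\ell^2(\Z_{\ge0};\sigma_\alpha^2)$. Hence $\kappa_-(\wt H_{\alpha,V})$ equals the number of eigenvalues of $K_{\wt H}(-0)$ that are $\ge 1$ (using monotonicity of the resolvent in $E$ on $(-\infty,0)$ together with transience, which guarantees $(\wt H_\alpha+0)^{-1}$ is a well-defined positive operator with the kernel $\wt G_\alpha(-0;\cdot,\cdot)$). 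Therefore
\be
\kappa_-(\wt H_{\alpha,V}) \le \operatorname{tr} K_{\wt H}(-0) = \sum_{n\ge 0} v_n\,\wt G_\alpha(-0;n,n)\,\sigma_\alpha(n)^2 = \frac{1}{\alpha}\sum_{n\ge 0} v_n = \frac{1}{\alpha}\sum_{n\ge 0} v_n^+\,\frac{(\alpha+1)_n}{n!}\cdot\frac{1}{\sigma_\alpha(n)^2}\cdot\sigma_\alpha(n)^2,
\ee
and since $\sigma_\alpha(n)^2=(\alpha+1)_n/n!$ this is exactly the right-hand side of \eqref{eq:bargm}. Care must be taken that the trace is computed with the weight $\sigma_\alpha(n)^2$, consistent with the convention \eqref{eq:heatWTaction} for operators on $\ell^2(\sigma_\alpha^2)$: the diagonal entry of the integral kernel of $V^{1/2}(\wt H_\alpha+0)^{-1}V^{1/2}$ is $v_n\wt G_\alpha(-0;n,n)$, and summing it against $\sigma_\alpha(n)^2$ gives the operator trace.

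The main obstacle, and the step requiring the most care, is the rigorous justification of the Birman--Schwinger reduction at the bottom of the essential spectrum, i.e. at $E=-0$ rather than $E<0$, for a potentially unbounded $V$ and an unbounded $\wt H_\alpha$. One has to argue that $\kappa_-(\wt H_{\alpha,V})$ is the limit as $E\uparrow 0$ of the number of Birman--Schwinger eigenvalues $\ge 1$ at energy $E$, and that $\operatorname{tr}K_{\wt H}(E)\uparrow \operatorname{tr}K_{\wt H}(-0)=\frac1\alpha\sum_n v_n$ by monotone convergence; boundedness from below and discreteness of the negative spectrum then follow a posteriori once the trace is finite (if $\sum v_n=\infty$ the estimate is vacuous). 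This is a standard argument (see the treatment for $1$D/radial Schr\"odinger operators, e.g. \cite[Theorem XIII.9]{rsIV}), but transferring it to the Jacobi-matrix setting needs the positivity and increasing-in-$E$ monotonicity of $G_\alpha(E;n,n)$ noted in the proof of Lemma~\ref{lem:rank1est}, together with the explicit finite value at $-0$ from Lemma~\ref{lem:Greenat0}. An alternative, which avoids some of these technicalities, is to mimic directly the classical proof: for each candidate eigenfunction one writes the Birman--Schwinger equation using \eqref{eq:H0resolv}, and estimates the Hilbert--Schmidt-type quantity via the diagonal Green's function; but the trace-bound route above is the cleanest.
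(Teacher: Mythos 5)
Your proposal follows the same basic strategy as the paper --- the Birman--Schwinger principle combined with a trace bound on $V^{1/2}H_\alpha^{-1}V^{1/2}$ --- the only real difference being how the trace is evaluated: you read off the diagonal of the zero-energy Green's function from Lemma~\ref{lem:Greenat0}, whereas the paper computes $\|\cW_\alpha^{1/2}(I-\cS^\ast)^{-1}\cL_\alpha V^{1/2}\delta_n\|_{\ell^2}^2$ from the string factorization \eqref{eq:Hfactor} and then sums the resulting series via $\sum_{k=0}^n k!/(\alpha+1)_{k+1}\le 1/\alpha$. Both routes are legitimate. However, your final display contains an algebra slip: since $\wt G_\alpha(-0;n,n)=\frac{1}{\alpha\,\sigma_\alpha(n)^2}$, the trace you compute is
\begin{equation*}
\operatorname{tr}K_{\wt H}(-0)=\sum_{n\ge0}v_n\,\wt G_\alpha(-0;n,n)\,\sigma_\alpha(n)^2=\frac{1}{\alpha}\sum_{n\ge0}v_n,
\end{equation*}
which is \emph{not} equal to $\frac{1}{\alpha}\sum_n v_n\frac{(\alpha+1)_n}{n!}$: the factor $\frac{(\alpha+1)_n}{n!}\cdot\frac{1}{\sigma_\alpha(n)^2}\cdot\sigma_\alpha(n)^2$ you insert simplifies to $\sigma_\alpha(n)^2$, not to $1$. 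This does not damage the proof, because $\frac{(\alpha+1)_n}{n!}=\sigma_\alpha(n)^2\ge1$ for $\alpha>0$, so the quantity you actually obtain is \emph{smaller} than the right-hand side of \eqref{eq:bargm}; taken at face value your computation yields the bound $\kappa_-(H_{\alpha,V})\le\frac{1}{\alpha}\sum_{n\ge0}v_n^+$, consistent with the fact that $G_\alpha(-0;n,n)\equiv1/\alpha$ by Lemma~\ref{lem:Greenat0}. Simply replace the spurious equality by the inequality $\frac{1}{\alpha}\sum_n v_n^+\le\frac{1}{\alpha}\sum_n v_n^+\frac{(\alpha+1)_n}{n!}$ and the argument is complete, modulo the justification of the Birman--Schwinger reduction at $E=-0$ that you already flag and that the paper likewise treats only formally (cf.\ the ``at least formally'' and Remark~\ref{rem:KKcomp}(i) there).
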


\begin{proof}
Again, by \eqref{eq:EVest+-} it suffices to prove \eqref{eq:bargm} for $V=V^+$. 
Let $\varepsilon>0$. It is the standard Birman--Schwinger argument (see \cite{simon}) that $-E<0$ is an eigenvalue of $H_{\alpha, \varepsilon V}$ if and only if  $\varepsilon^{-1}$ is the eigenvalue of $V^{1/2} (H_{\alpha} + E)^{-1} V^{1/2}$. Therefore, if the operator $V^{1/2} H_{\alpha}^{-1} V^{1/2}$ extends to a bounded operator on $\ell^2(\Z_{\ge 0})$ (notice that $H_{\alpha}^{-1}$ is densely defined on $\ell^2(\Z_{\ge 0})$ since $0$ is not an eigenvalue of $H_\alpha$), then the number of negative eigenvalues of $H_{\alpha,V}$ equals the number of eigenvalues of  $V^{1/2} H_{\alpha}^{-1} V^{1/2}$ which are greater than $1$. And hence
\begin{align*}
\kappa_-(H_{\alpha,V}) \le {\rm tr}\, (V^{1/2} H_{\alpha}^{-1} V^{1/2}).
\end{align*}
Taking into account \eqref{eq:Hfactor}, we (at least formally) get (however, see Remark \ref{rem:KKcomp} below) 
\begin{align*}
V^{1/2} H_\alpha^{-1} V^{1/2} = V^{1/2} \cL_\alpha (I-\cS)^{-1} \cW_\alpha (I-\cS^\ast)^{-1} \cL_\alpha V^{1/2},
\end{align*}
and hence the trace of this operator is given explicitly by
\begin{align*}
{\rm tr}(V^{1/2} H_\alpha^{-1} V^{1/2})  & = \sum_{n\ge 0}\langle V^{1/2} H_\alpha^{-1} V^{1/2} \delta_n,\delta_n\rangle_{\ell^2}\\
& = \sum_{n\ge 0} \|\cW_\alpha^{1/2}(I-\cS^\ast)^{-1}\cL_\alpha V^{1/2} \delta_n\|_{\ell^2}^2\\
& = \sum_{n\ge 0} v_n\,\frac{(\alpha+1)_n}{n!} \|\cW_\alpha^{1/2}(I-\cS^\ast)^{-1} \delta_n\|_{\ell^2}^2\\
&= \sum_{n\ge 0} v_n\,\frac{(\alpha+1)_n}{n!}  \sum_{k=0}^{n} \frac{k!}{(\alpha+1)_{k+1}}.
\end{align*}
Thus we get 
\be\label{eq:kappa-}
\kappa_-(H_{\alpha,V}) \le \sum_{n\ge 0} v_n\, \frac{(\alpha+1)_n}{n!}  \sum_{k=0}^{n} \frac{k!}{(\alpha+1)_{k+1}}.
\ee
Now take into account that by \dlmf{15.4.20}
\begin{align*}
\sum_{k\ge 0} \frac{k!}{(\alpha+1)_{k+1}} = \frac{1}{\alpha+1} \sum_{k\ge 0} \frac{k!}{(\alpha+2)_{k}} 
= \frac{1}{\alpha+1}\hyp21{1,1}{\alpha+2}{1} 
= \frac{1}{\alpha}.
\end{align*}
Combining the latter with \eqref{eq:kappa-}, we arrive at the desired estimate.
\end{proof}

\begin{remark}\label{rem:KKcomp}
A few remarks are in order.
\begin{itemize}
\item[(i)] 
Using the string factorization \eqref{eq:Hfactor}, it follows from \cite[\S 13]{kakr74} that the operator $V^{1/2} H_\alpha^{-1} V^{1/2} $ with $\alpha>0$ is compact exactly when 
\begin{align*}
\sum_{k=0}^n v_k l_\alpha(k) \sum_{k >n} \omega_\alpha(k) 
   = \sum_{k=0}^n v_k \frac{(\alpha+1)_k}{k!} \sum_{k >n} \frac{k!}{(\alpha+1)_{k+1}} = o(1)
\end{align*}
as $n\to \infty$. In particular, the inclusion $v\in \ell^1(\Z_{\ge 0})$ would imply compactness of  $V^{1/2} H_{\alpha}^{-1} V^{1/2}$ if $\alpha>0$.
\item[(ii)]
 Clearly, $\ell^1(\sigma_\alpha^2)$ is contained in  $\ell^{1+\alpha}(\sigma_\alpha^2)$ if $\alpha>0$ and hence \eqref{eq:CLR} applies to a wider class of potentials than \eqref{eq:bargm}. However, this embedding is not continuous. Moreover, we do not know the optimal $C(\alpha)$ in \eqref{eq:CLR}.
\item[(iii)] 
The case $\alpha\in (-1,0]$ requires different considerations and it will be considered elsewhere. However, let us mention that using the Birman--Schwinger principle and applying the commutation to the string factorization, one can show that for $\alpha\in (-1,0]$,
\be\label{eq:bargm2}
\kappa_-(H_{\alpha,V}) \le 1+ \sum_{n\ge 0} \frac{n!}{(\alpha+1)_{n+1}} \sum_{k\ge n} v_n^+\, \frac{(\alpha+1)_k}{k!},
\ee
where the second summand on the RHS in \eqref{eq:bargm2} is dual to the RHS \eqref{eq:kappa-}.
\item[(iv)] 
The study of spectral types (ac-spectrum, sc-spectrum etc.) of a positive spectrum of $H_{\alpha,V}$ is beyond the scope of the present paper, however, see the recent preprint \cite{yaf}. 
\end{itemize}
\end{remark}

\subsection{Hardy inequality}\label{ss:Hardy}

Our final goal is to provide the optimal Hardy inequality for the operator $H_\alpha$ (the optimality is understood in the sense of \cite[\S 1.2]{kpp18}). For each $\alpha>0$, define the following weights $v_\alpha = (v_\alpha(n))_{n\ge 0}$,
\begin{align}\label{eq:vHardy}
v_\alpha(n) = \frac{q_\alpha(n) + q_\alpha(n+1)}{2},\qquad n\ge 0,
\end{align}
where $q_\alpha = (q_\alpha(n))_{n\ge 0}$ is a positive sequence given by
\begin{align}\label{eq:Qun}
q_\alpha(n) 
= 2n+ \alpha - \sqrt{(2n+ \alpha)^2 - \alpha^2} = \frac{\alpha^2}{2n+ \alpha + \sqrt{(2n+ \alpha)^2 - \alpha^2}}.
\end{align}

\begin{theorem}\label{th:Hardy}
Let $\alpha>0$. The weight $v_\alpha$ is the optimal Hardy weight for $H_\alpha$, that is, the following assertions hold true:
\begin{itemize}
\item[(i)] The operator $H_\alpha - V_\alpha$ is nonnegative and the Hardy-type inequality 
\begin{align}\label{eq:HardyIn}
 \sum_{n\ge 0} \big| \sqrt{n+\alpha+1} u_n - \sqrt{n+1} u_{n+1}\big|^2 \ge \sum_{n\ge 0} v_\alpha(n) |u_n|^2
\end{align}
holds true for all $u\in \ell^2_c(\Z_{\ge 0})$.
\item[(ii)] For any positive $v = (v_n)_{n\ge 0}$ such that $V\neq V_\alpha$ and $V\ge V_\alpha$ the operator $H_\alpha - V$ in no longer nonnegative in $\ell^2(\Z_{\ge 0})$. 
\item[(iii)] For any $\lambda>0$ and any finite subset $X\subset \Z_{\ge 0}$, $H_\alpha - V_\alpha \ge \lambda V_\alpha$ fails to hold on $\ell^2_c(\Z_{\ge 0}\setminus X)$ (the inequality is understood in the form sense).
\end{itemize}
\end{theorem}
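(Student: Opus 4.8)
The plan is a ground--state (Picone) transform with the explicit ground state $\varphi\equiv\id$. The decisive elementary observation is that $(2n+\alpha)^2-\alpha^2=4n(n+\alpha)$, so that \eqref{eq:Qun} factorizes as $q_\alpha(n)=(\sqrt{n+\alpha}-\sqrt n)^2$, hence $\sqrt{n(n+\alpha)}=n+\tfrac\alpha2-\tfrac12 q_\alpha(n)$. Adding this at $n$ and at $n+1$ gives $\sqrt{n(n+\alpha)}+\sqrt{(n+1)(n+1+\alpha)}=(2n+1+\alpha)-v_\alpha(n)$, which by \eqref{eq:tau} is exactly
\[
(\tau_\alpha\id)_n=v_\alpha(n),\qquad n\ge0
\]
(the convention $u_{-1}=0$ taking care of $n=0$). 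Thus $\varphi\equiv\id$ is a positive solution of $(H_\alpha-V_\alpha)\varphi=0$; equivalently $\varphi=\sqrt{u_+u_-}$ is the geometric mean of the two distinguished positive solutions of $\tau_\alpha u=0$, namely the polynomial $u_+(n)=P_{\alpha,n}(0)=\sigma_\alpha(n)$ (a solution on all of $\Z_{\ge0}$) and $u_-(n)=\sigma_\alpha(n)^{-1}$, proportional to the Weyl solution $(\Psi_{\alpha,n}(0))_n$ (cf.\ the proof of Lemma \ref{lem:Greenat0}), with $u_-/u_+=\sigma_\alpha^{-2}\to0$.

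I prove (i) by running the transform directly. From the identity
\[
\big|a\,u_n-b\,u_{n+1}\big|^2=ab\,\big|u_n-u_{n+1}\big|^2+(a-b)\big(a|u_n|^2-b|u_{n+1}|^2\big),\qquad a:=\sqrt{n+1+\alpha},\ \ b:=\sqrt{n+1},
\]
summing over $n\ge0$ and reindexing (the second term telescopes; the resulting coefficient of $|u_m|^2$ is $a_m(a_m-b_m)-b_{m-1}(a_{m-1}-b_{m-1})=(\tau_\alpha\id)_m=v_\alpha(m)$ for $m\ge1$, and $a_0(a_0-b_0)=v_\alpha(0)$), one obtains, for every $u\in\ell^2_c(\Z_{\ge0})$, the ground--state representation
\[
\gt_\alpha[u]=\sum_{n\ge0}\sqrt{(n+1)(n+1+\alpha)}\,\big|u_n-u_{n+1}\big|^2+\sum_{n\ge0}v_\alpha(n)\,|u_n|^2 .
\]
Discarding the nonnegative first sum yields \eqref{eq:HardyIn}; and since $V_\alpha$ is bounded (as $v_\alpha(n)\to0$) and $\ell^2_c(\Z_{\ge0})$ is an operator core for $H_\alpha$ (limit-point case), hence a form core for $\gt_\alpha$, the operator $H_\alpha-V_\alpha$ is nonnegative.

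The representation reduces (ii) and (iii) to properties of the transformed nearest--neighbour form $\hat\gt[u]:=\sum_{n\ge0}c_n|u_n-u_{n+1}|^2$, $c_n:=\sqrt{(n+1)(n+1+\alpha)}$, on $\Z_{\ge0}$, since $\gt_\alpha[u]-\langle Vu,u\rangle=\hat\gt[u]-\langle(V-V_\alpha)u,u\rangle$ for all $u\in\ell^2_c$. Because $c_n\le n+1+\alpha$ we get $\sum_n c_n^{-1}\ge\sum_n(n+1+\alpha)^{-1}=\infty$, so $\hat\gt$ is critical: its only positive supersolutions are the constants and it carries no nonzero Hardy weight (standard criticality theory for graph Schr\"odinger forms, see \cite{kpp18}). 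Hence for $V\ge V_\alpha$ with $V\ne V_\alpha$ the weight $W:=V-V_\alpha\ge0$ is not identically $0$, and $\hat\gt-\langle W\cdot,\cdot\rangle$ cannot be nonnegative on $\ell^2_c(\Z_{\ge0})$; this is (ii). For (iii) one adds that $q_\alpha(n)\ge\alpha^2/\big(4(n+\alpha)\big)$, so $v_\alpha(n)\ge\alpha^2/\big(8(n+\alpha)\big)$ and $\sum_n v_\alpha(n)=\infty$, i.e.\ $\varphi\equiv\id\notin\ell^2(\Z_{\ge0};v_\alpha)$ (null--criticality). Given a finite $X$ and $\lambda>0$, taking test functions $u\in\ell^2_c(\Z_{\ge0}\setminus X)$ equal to $\id$ on a long block disjoint from $X$ with a bounded--energy cutoff keeps $\hat\gt[u]$ bounded while $\langle V_\alpha u,u\rangle\to\infty$, so $\hat\gt[u]/\langle V_\alpha u,u\rangle$ drops below $\lambda$; thus $H_\alpha-V_\alpha\ge\lambda V_\alpha$ fails on $\ell^2_c(\Z_{\ge0}\setminus X)$. (Alternatively: $H_\alpha$ is subcritical for $\alpha>0$ by Lemma \ref{lem:Greenat0}, $\varphi=\sqrt{u_+u_-}$ with $u_-/u_+\to0$, and $\sum_n c_n^{-1}=\sum_n v_\alpha(n)=\infty$, so (i)--(iii) follow at once from the optimal--Hardy--weight construction of \cite{kpp18}.)

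The only input that is not bookkeeping is the identification of the ground state, i.e.\ the identity $\tau_\alpha\id=V_\alpha\id$, and this is essentially dictated by the form of $v_\alpha$ in \eqref{eq:vHardy}--\eqref{eq:Qun}, so the computation is short. The part demanding the most care is the optimality, particularly (iii): either building the cutoff/null sequence on $\Z_{\ge0}\setminus X$ with controlled $\hat\gt$--energy, or — in the second route — verifying precisely the hypotheses of the optimal--Hardy--weight theorem of \cite{kpp18}, including the behaviour of $u_\pm$ near the finite vertex $0$.
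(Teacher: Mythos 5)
Your proof is correct, and while it rests on the same structural insight as the paper's, it unfolds it in a more elementary, self-contained way. The paper conjugates by $\cL_\alpha$ to the weighted operator $\wt{H}_\alpha$ of \eqref{eq:wtHalpha} and then invokes \cite[Theorem~1.1]{kpp18} with the pair of positive solutions $f\equiv 1$ and $g=\sigma_\alpha^{-2}$ of $\wt\tau_\alpha u=0$, checking only that the resulting optimal weight equals $\sigma_\alpha^2 v_\alpha$; all three assertions (i)--(iii) are then quoted from that theorem. Your supersolution $\varphi\equiv\id$ is precisely $\cL_\alpha\sqrt{fg}$ (equivalently, the geometric mean of $P_\alpha(0)=\sigma_\alpha$ and the Weyl solution $\Psi_\alpha(0)=\tfrac1\alpha\sigma_\alpha^{-1}$), so the underlying object is identical; what you add is the explicit mechanism: the factorization $q_\alpha(n)=(\sqrt{n+\alpha}-\sqrt n)^2$ from \eqref{eq:Qun} gives $\tau_\alpha\id=V_\alpha\id$, and your summation-by-parts identity yields the ground-state representation
\begin{equation*}
\gt_\alpha[u]=\sum_{n\ge0}\sqrt{(n+1)(n+1+\alpha)}\,\big|u_n-u_{n+1}\big|^2+\sum_{n\ge0}v_\alpha(n)|u_n|^2,\qquad u\in\ell^2_c(\Z_{\ge0}),
\end{equation*}
which I have checked and which gives (i) at once, with an explicit nonnegative remainder rather than an appeal to an external theorem. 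For (ii) you use criticality of the transformed path form ($\sum_n c_n^{-1}=\infty$ since $c_n\le n+1+\alpha$) and for (iii) null-criticality ($\sum_n v_\alpha(n)=\infty$, via $v_\alpha(n)\ge\alpha^2/(8(n+\alpha))$, which is correct) together with an explicit two-sided cutoff; these are exactly the two hypotheses that \cite{kpp18} packages into its optimality statement, so your argument is a transparent specialization of the paper's citation. The trade-off: your route gives a quantitative remainder in (i) and makes the optimality mechanism visible, at the cost of having to justify the standard facts that a recurrent path form carries no nonzero Hardy weight (for which \cite{kpp20} is the cleanest reference) and that $\ell^2_c$ is a form core (which follows from Lemma \ref{lem:form}); the paper's route is shorter and delivers (i)--(iii) simultaneously from one theorem. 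Your closing parenthetical alternative is essentially verbatim the paper's proof.
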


\begin{remark}\label{rem:HardyAss}
Before proving the above result let us briefly comment on the asymptotic behavior of $v_\alpha$ for large $n$.
 Clearly, 
\begin{align}\label{eq:HardyAssMain}
v_\alpha(n) = \frac{\alpha^2}{4n} + o(n^{-1})
\end{align}
as $n\to \infty$. On the other hand, using the Taylor series  expansion
\begin{align*} 
x - \sqrt{x^2 - a^2} = x\big( 1- \sqrt{1 - (a/x)^2}\big) =  \frac{x}{2}\sum_{k\ge 1}\frac{(1/2)_{k-1}}{k!}\Big( \frac{a}{x}\Big)^{2k},\quad \Big|\frac{a}{x}\Big|<1,
\end{align*}
we immediately get 
\begin{align}\label{eq:Hardy01C}
v_\alpha(n) = \frac{1}{4}\sum_{k\ge 1}\frac{(1/2)_{k-1}\alpha^{2k}}{k!} \left(\frac{1}{(2n+ \alpha)^{2k-1}} + \frac{1}{(2n+ \alpha + 2)^{2k-1}}\right),
\end{align}
for all $n\ge 0$. Combining \eqref{eq:Hardy01C} with \eqref{eq:HardyAssMain}, we arrive at the estimate
\begin{align}
0< v_\alpha(n) - \frac{\alpha^2}{4}\left(\frac{1}{2n+ \alpha} + \frac{1}{2n+ \alpha + 2}\right) = \OO(n^{-3}), 
\end{align}
as $n\to \infty$. The latter together with \eqref{eq:HardyIn} implies the following Hardy inequality
\begin{align}\label{eq:HardyIn02}
 \sum_{n\ge 0} \big| \sqrt{n+\alpha+1} u_n - \sqrt{n+1} u_{n+1}\big|^2 > \frac{\alpha^2}{2}\sum_{n\ge 0} \frac{|u_n|^2}{2n+\alpha+1} ,
\end{align}
which holds true for all $0\neq u\in \ell^2_c(\Z_{\ge 0})$.
\end{remark}
 
\begin{proof}[Proof of Theorem \ref{th:Hardy}]
Taking into account that \eqref{eq:HardyIn} is noting but
\[
\gt_\alpha[u] \ge \langle V_\alpha u,u\rangle_{\ell^2},
\]
then using the operator $\cL_\alpha$ given by \eqref{eq:La} as well as the equality \eqref{eq:wtHalpha}, the above inequality is equivalent to 
\begin{align*}
\wt{\gt}_\alpha[u] \ge \langle V_\alpha u,u\rangle_{\ell^2(\sigma_\alpha^2)}.
\end{align*}
By \eqref{eq:wtform}, the latter reads 
\begin{align}\label{eq:HardyInB}
\sum_{n\ge 0} \frac{(\alpha+1)_{n+1}}{n!}|u_n-u_{n+1}|^2 \ge \sum_{n\ge 0}\frac{(\alpha+1)_{n}}{n!}v_\alpha(n)|u_n|^2.
\end{align}
The proof of this inequality is based on \cite[Theorem~1.1]{kpp18}. Consider the sequences $f=(f_n)_{n\ge 0}$ and $g=(g_n)_{n\ge 0}$ given by
\begin{align*}
f_n & = 1, & g_n = \frac{1}{\sigma_\alpha(n)^2} = \frac{n!}{(\alpha+1)_{n}}, 
\end{align*}
for all $n\ge 0$. It is straightforward to verify that $(\wt{\tau}_\alpha f)_n = 0$ for all $n\ge 0$ and $(\wt{\tau}_\alpha g)_n = 0$ for all $n\ge 1$, where $\wt{\tau}_\alpha$ is given by \eqref{eq:StringDiff} (on the other hand, notice that $f = \cL_\alpha^{-1} P_\alpha(0)$ and $g = \alpha\cL_\alpha^{-1}\Psi_\alpha(0)$, where $P_\alpha(0) = (P_{\alpha,n}(0))_{n\ge 0}$ and $\Psi_\alpha(0) = (\Psi_{\alpha,n}(0))_{n\ge 0} = (Q_{\alpha,n}(0) + m_\alpha(0) P_{\alpha,n}(0))_{n\ge 0}$). Taking into account that $g_n$ is strictly decreasing as $n\to \infty$ (since $\alpha>0$), $f$ and $g$ satisfy the assumptions of \cite[Theorem~1.1]{kpp18} and hence the weight $\wt{V}_\alpha = (\wt{v}_\alpha(n))_{n\ge 0}$ given by 
\begin{align*}
\wt{v}_\alpha(n) = \frac{1}{\sqrt{g_n}}\sum_{k\ge 0\colon |k-n|=1} \frac{\sqrt{g_n} - \sqrt{g_k}}{\omega_\alpha(\min(n,k))},\qquad n\ge 0,
\end{align*}
is the optimal Hardy weight for $\wt{H}_\alpha$ (in the sense of \cite{kpp18}). This in particular implies the validity of the inequality
\begin{align*}
\wt{\gt}_\alpha[u] \ge \sum_{n\ge 0}\wt{v}_\alpha(n)|u_n|^2,
\end{align*}
for all $u\in\ell^2_c(\Z_{\ge 0})$. It remains to notice that
\begin{align*}
\wt{v}_\alpha(0) &= \alpha+1 - \sqrt{\alpha+1} = {v}_\alpha(0), \\
\wt{v}_\alpha(n) &= \frac{(\alpha+1)_n}{(n-1)!}\Big(1 - \sqrt{\frac{n+\alpha}{n}} \Big) + \frac{(\alpha+1)_{n+1}}{n!}\Big(1 - \sqrt{\frac{n+1}{n+\alpha+1}} \Big) 
= \sigma_\alpha(n)^2{v}_\alpha(n),
\end{align*}
for all $ n\ge 1$. This immediately implies that \eqref{eq:HardyInB} is the optimal Hardy inequality for $\wt{H}_\alpha$, which completes the proof.
\end{proof}

\begin{remark}
A few concluding remarks are in order.
\begin{itemize}
\item[(i)] 
The classical Hardy inequality (after a simple change of variables $u_n\mapsto nu_n$) states that the inequality\footnote{It was proved in \cite[Theorem~7.3]{kpp18} that replacing $1/4$ on the RHS \eqref{eq:HardyClass} by the weight $w=(w_n)_{n\ge 0}$, $w_n = n^2(2-\sqrt{1+1/n} - \sqrt{1-1/n})$ is the optimal Hardy inequality.} 
\begin{align}\label{eq:HardyClass}
\sum_{n\ge 0}|(n+1)u_{n+1} - nu_n |^2 \ge \frac{1}{4}\sum_{n\ge 1} |u_n|^2
\end{align}
holds true for all $u\in \ell^2(\Z_{\ge 0})$. Setting $\alpha=1$ in \eqref{eq:HardyIn02} and changing variables $u_n\mapsto \frac{1}{\sqrt{n+1}}u_n$ (cf.  \eqref{eq:HardyInB} with $\alpha=1$), we get the inequality 
\begin{align}\label{eq:HardyIn02a=1}
 \sum_{n\ge 0} (n+1)(n+2)\big|  u_{n+1} - u_n \big|^2 \ge \frac{1}{4}\sum_{n\ge 0}|u_n|^2,
\end{align}
which looks in a certain sense similar to the classical one.
\item[(ii)]
CLR and Bargmann-type bounds can be seen as ``integral" conditions which guarantee the positivity of the perturbed operator $H_{\alpha,V}$ (if the RHS in \eqref{eq:CLR} or \eqref{eq:bargm} is less than $1$, then clearly the corresponding LHS is zero). Hardy-type inequalities allow to obtain ``pointwise" positivity conditions. Namely, 
applying the standard minmax principle, \eqref{eq:HardyIn} implies that
\begin{align}\label{eq:kappa=0}
\kappa_-(H_{\alpha,V}) = 0
\end{align}
whenever $v_n^+ \le v_\alpha(n)$ for all $n\ge 0$. In particular, using \eqref{eq:HardyIn02} we conclude that \eqref{eq:kappa=0} holds true whenever
\[
v_n^+ \le \frac{\alpha^2}{2(2n+\alpha+1)},\qquad \text{for all}\ n\ge 0.
\]
\item[(iii)]
The Hardy inequality implies the following {\em Kneser-type result}:  if 
\begin{align}
\limsup_{n\ge 0} nv_n^+ < \frac{\alpha^2}{4},
\end{align}
then $\kappa_-(H_{\alpha,V}) <\infty$. 
Conversely, if there is $\varepsilon>0$ such that $v_n \ge \frac{\alpha^2+\varepsilon}{n}$ for all large enough $n$, then $\kappa_-(H_{\alpha,V}) = \infty$.
\item[(iv)]
If $\alpha\in (-1,0]$, then Lemma \ref{lem:criticalA} implies that for each $v=v_+\not\equiv 0$ there is $u\in \ell^2_c(\Z_{\ge 0})$ such that 
\[
 \sum_{n\ge 0} \big| \sqrt{n+\alpha+1} u_n - \sqrt{n+1} u_{n+1}\big|^2 < \sum_{n\ge 1} v_n |u_n|^2.
\]
In the terminology of \cite{kpp18,kpp20} the latter means that $H_\alpha$ is {\em critical} for all $\alpha\in (-1,0]$. However, choosing $g = g_\alpha = (g_\alpha(n))_{n\ge 0}$ as
\begin{align*}
g_\alpha(0) & = 0, & g_\alpha(n) & = \sum_{k=1}^n \omega_\alpha(k),\ n\ge 1,
\end{align*}
it is straightforward to check that for all $\alpha\in (-1,0]$, $g_\alpha$ satisfies the assumptions of Theorem~1.1 from \cite{kpp18}. Therefore, by  \cite[Theorem~1.1]{kpp18}, the weight 
\begin{align}\label{eq:HardyWTA}
\wt{v}_\alpha(n) = \frac{1}{\sqrt{g_\alpha(n)}}\sum_{k\ge 0\colon |k-n|=1} \frac{\sqrt{g_\alpha(n)} - \sqrt{g_\alpha(k)}}{\omega_\alpha(\min(n,k))},\qquad n\ge 1,
\end{align}
is the optimal Hardy weight and the following  optimal Hardy inequality (cf. \eqref{eq:HardyInB}) holds true
\begin{align}\label{eq:HardyInA}
 \sum_{n\ge 0} \frac{(\alpha+1)_{n+1}}{n!}|u_n-u_{n+1}|^2 \ge \sum_{n\ge 1} \wt{v}_\alpha(n) |u_n|^2,
\end{align}
however,  for all $u\in \ell^2_c(\Z_{\ge 0})$ with $u_0=0$.  
In particular, for $\alpha=0$, $g_0(n) = \sum_{k=1}^n \frac{1}{k} = {\rm h}_n$ are the {\em harmonic numbers} and the corresponding inequality is 
\[
\sum_{n\ge 0}(n+1)|u_n-u_{n+1}|^2 \ge \sum_{n\ge 1} v_0(n) |u_n|^2,\quad u_0=0,
\]
where 
\begin{align*}
v_0(n) & = q_0(n) + q_0(n+1), & q_0(n) & = n - \sqrt{n^2 - n{\rm h}_n} ,\ \ n\ge 1.
\end{align*}
\end{itemize}
\end{remark}

\appendix

\section{Jacobi polynomials}\label{app:jacobi}
For $\alpha$, $\beta>-1$, let
$w^{(\alpha,\beta)}(x)= (1-x)^\alpha (1+x)^\beta$ for $x\in (-1,1)$
be a Jacobi weight.
The corresponding orthogonal polynomials $P_n^{(\alpha,\beta)}$,
normalized by
\be\label{eq:normal}
P_n^{(\alpha,\beta)} (1) = \binom{n+\alpha}{n} = \frac{(\alpha+1)_n}{n!}
\ee
for all $n\ge0$ (see \eqref{K6} for notation of Pochhammer symbols
and binomial coefficients), are called the {\em Jacobi polynomials}.
They are expressed as (terminating) Gauss hypergeometric
series \eqref{K7}
by \cite[(4.21.2)]{sz}
\be\label{eq:01}
\frac{P_n^{(\alpha,\beta)}(x)}{P_n^{(\alpha,\beta)}(1)} =
\hyp21{-n,n+\alpha+\beta+1}{\alpha+1}{\frac{1-x}{2}}.
\ee
They also satisfy Rodrigues' formula \cite[(4.3.1), (4.3.2)]{sz}
\begin{align}
P_n^{(\alpha,\beta)}(x) &=\sum_{k=0}^n \binom{n+\alpha}{n-k} \binom{n+\beta}{k} \left(\frac{x-1}{2}\right)^k\left(\frac{x+1}{2}\right)^{n-k}\label{K5}\\[1mm]
&= \frac{(-1)^n}{2^n n!}(1-x)^{-\alpha}(1+x)^{-\beta} \frac{d^n}{dx^n} \big\{(1-x)^{\alpha+n}(1+x)^{\beta+n} \big\}.\label{eq:01a}
\end{align}
This formula immediately implies
\be\label{eq:symmetry}
P_n^{(\alpha,\beta)}(-x) = (-1)^nP_n^{(\beta,\alpha)}(x),
\ee
and hence 
\be\label{eq:normal_b}
P_n^{(\alpha,\beta)} (-1) =(-1)^n \binom{n+\beta}{n} =
(-1)^n\,\frac{(\beta+1)_n}{n!}\,.
\ee
Jacobi polynomials include the Chebyshev polynomials, the ultraspherical (Gegenbauer) polynomials,  
and the Legendre polynomials (see \cite{dlmf}, \cite{sz} for further details).

\bigskip
{\bf Acknowledgments.}
I am grateful to Matthias Keller and Noema Nicolussi for numerous useful discussions. I am also indebted to the referee for the careful reading of the manuscript and remarks.


\end{document}